\theoremstyle{plain}
\newtheorem{thm}{Theorem}[section]
\newtheorem{theorem}[thm]{Theorem}
\newtheorem{lemma}[thm]{Lemma}
\newtheorem{proposition}[thm]{Proposition}
\theoremstyle{definition}
\newtheorem{de}[thm]{Definition}
\newtheorem{example}[thm]{Example}
\numberwithin{equation}{section}
\begin{document}

\title{Knot-theoretic flocks}

\author{Maciej Niebrzydowski}
\author{Agata Pilitowska}
\author{Anna Zamojska-Dzienio}

\address{(M.N.) Institute of Mathematics,
Faculty of Mathematics, Physics and Informatics,
University of Gda{\'n}sk, 80-308 Gda{\'n}sk, Poland}
\address{(A.P., A.Z.) Faculty of Mathematics and Information Science, Warsaw University of Technology, Koszykowa 75, 00-662 Warsaw, Poland}

\email{(M.N.) mniebrz@gmail.com}
\email{(A.P.) apili@mini.pw.edu.pl}
\email{(A.Z.) A.Zamojska-Dzienio@mini.pw.edu.pl}


\keywords{Ternary quasigroup, knot invariant, para-associativity, Dehn presentation, flock, extra loop, heap, cocycle invariant, group action}
\subjclass[2010]{Primary: 20N15, 57M25. Secondary: 57M27, 03C05, 08A05.}

\date{\today}

\begin{abstract}
We characterize the para-associative ternary quasigroups (flocks) applicable to knot theory, and show which of these structures are isomorphic. We enumerate them up to order 64.  We note that the operation used in knot-theoretic flocks has its non-associative version in extra loops. We use a group action on the set of flock colorings to improve the cocycle invariant associated with the knot-theoretic flock (co)homology. 
\end{abstract}

\maketitle
\section{Introduction and preliminary definitions}
Knot-theoretic ternary quasigroups are algebraic structures suitable for colorings of regions in the knot diagrams. Their operations generalize the ternary relations of the form $d=ab^{-1}c$ from the Dehn presentation of the knot group, just like the quandle operations generalize the conjugation present in the Wirtinger relations. Knot-theoretic ternary quasigroups are introduced in full generality in \cite{Nie17}, but see also \cite{Nie14} and \cite{NeNe17}. 

In this paper, we work with the sub-family of knot-theoretic ternary quasigroups introduced in \cite{Nie14}. They do not require orientation to produce coloring invariants of knots and knotted surfaces. We generalize the results of \cite{NPZ19}, where we described the structure of knot-theoretic ternary groups. Replacing the ternary associativity
condition with a similar condition of para-associativity, somewhat surprisingly leads to the structures based on nonabelian groups in place of the abelian ones.
The idempotent case corresponds to homomorphisms from the knot group, and the non-idempotent case involves a central involution in a group. It is possible to define various group actions on the set of flock colorings, which are compatible with the Reidemeister moves. We use one of them to strenghten the flock cocycle invariant obtained from the (co)homology of ternary algebras introduced in \cite{Nie17}. We also note the non-associative version of the obtained flock operations using extra loops. Let us begin with the necessary definitions.

A {\it ternary groupoid} is a non-empty set $X$ equipped with a ternary operation $[\, ]\colon X^3\to X$. It is denoted by $(X,[\, ])$.

A ternary groupoid $(X,[\, ])$ is called a \emph{ternary quasigroup} if for every $a,b,c\in X$ each of the following equations is uniquely solvable for $z\in X$:
\begin{align}
&[zab]=c, \\
&[azb]=c, \label{eq:m}\\
&[abz]=c.
\end{align}

We say that an operation $[\, ] \colon X^3 \to X$ is \emph{associative} if for all $a,b,c,d,e\in X$
\[
[[abc]de]=[a[bcd]e]=[ab[cde]].
\]

An associative ternary quasigroup is called a \emph{ternary group}; see \cite{Post} for a treatise on $n$-ary groups.

An operation $[\, ] \colon X^3 \to X$ is \emph{para-associative} if for all $a,b,c,d,e\in X$
\[
[[abc]de]=[a[dcb]e]=[ab[cde]].
\]

Various categories of para-associative groupoids were studied by Wagner in \cite{Wag53}. See also \cite{HoLa17}.

A para-associative ternary quasigroup is called a \emph{flock}. The connection of flocks with affine geometry was investigated by Dudek in \cite{Dud99}.

We say that a ternary groupoid $(X,[\, ])$ is \emph{idempotent} if $[aaa]=a$ for all $a\in X$. Idempotent flocks were used, for example, in \cite{Cer43}.

\begin{figure}
\begin{center}
\includegraphics[height=3 cm]{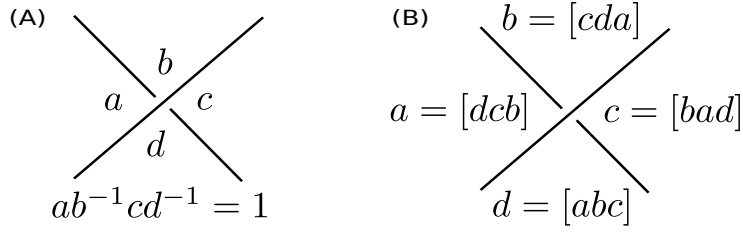}
\caption{A relation in the Dehn presentation can be realized using a para-associative operation $[xyz]=xy^{-1}z$.}\label{Dehngen}
\end{center}
\end{figure}

A good topological motivation for considering para-associativity comes from relations in the Dehn presentation of the knot group. Recall that in the Dehn presentation generators are assigned to the regions in the complement of a knot diagram $D$ on a plane, and relations correspond to the crossings and are as in Fig. \ref{Dehngen}(A). One of the generators, for example the one corresponding to the unbounded region, is set equal to identity. Geometrically, a generator can be viewed as a loop originating from a fixed point $P$ beneath the diagram, piercing a region to which it is assigned, and returning to $P$ through a region labeled with the identity element. See e.g. \cite{Kau83} for more details about Dehn presentation. 
Note that the fundamental group relations can be realized using a para-associative operation $[xyz]=xy^{-1}z$, see Fig. \ref{Dehngen}(B). We will show that para-associativity leaves a bit of room for generalizing this operation.

\begin{figure}
\begin{center}
\includegraphics[height=5 cm]{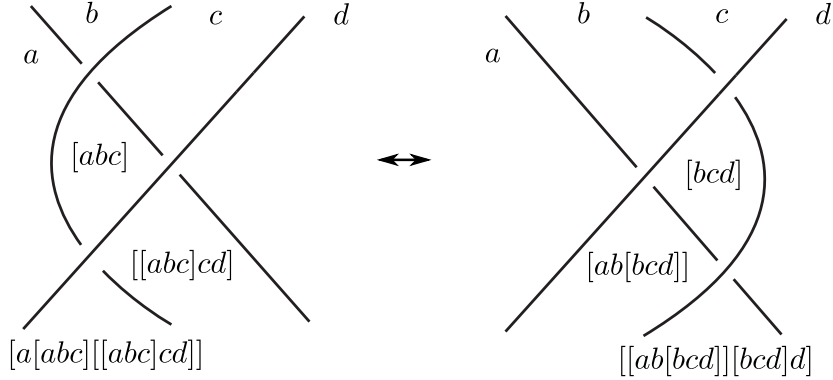}
\caption{The third Reidemeister move and the nesting conditions.}\label{rad3br}
\end{center}
\end{figure}

\begin{figure}
\begin{center}
\includegraphics[height=3 cm]{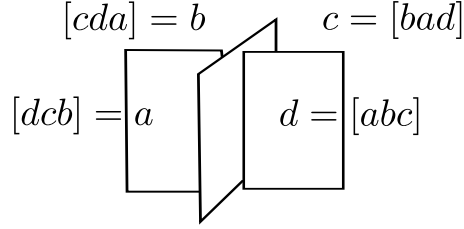}
\caption{Coloring of three-dimensional regions near a double point curve in a knotted surface diagram.}\label{surfdehn}
\end{center}
\end{figure}

The following two nesting conditions obtained from the coloring of regions in the third Reidemeister move were defined in \cite{Nie14}, see Fig. \ref{rad3br}:
\begin{equation}
\forall_{a,b,c,d\in X} \quad [ab[bcd]]=[a[abc][[abc]cd]],  \tag{LN}
\end{equation}
\begin{equation}
\forall_{a,b,c,d\in X} \quad [[abc]cd]=[[ab[bcd]][bcd]d]. \tag{RN} 
\end{equation}

By adding the adjective \emph{knot-theoretic} when talking about a ternary groupoid of some sort, we mean that the said groupoid satisfies the conditions LN and RN.
Thus we consider, for example, \emph{knot-theoretic ternary quasigroups},
\emph{knot-theoretic ternary groups}, and \emph{knot-theoretic flocks}.

Colorings of a knot or a knotted surface diagram $D$ with elements of a knot-theoretic flock $(X,[\, ])$ are defined in a simple way, as in Fig. \ref{Dehngen}(B) and Fig. \ref{surfdehn}. More specifically, they are functions $\mathcal{C}\colon Reg(D)\to X$, from the set of regions in the complement of $D$, such that a color $x$ near a crossing (resp. double-point curve) is expressed as $x=[yzw]$, where $y$, $z$, $w$ are the remaining three colors taken in a cyclic order, and the regions colored by $x$ and $y$ are separated by an over-arc (resp. over-sheet).
For Yoshikawa diagrams, it is required that near a marker the colors are assigned in an $a$, $b$, $a$, $b$ fashion, that is, opposite regions receive the same color.
The number of such colorings is an invariant under the applicable moves (Reidemeister, Roseman or Yoshikawa moves), see \cite{Nie17,KimNel} for more details.

In a ternary quasigroup $(X,[\, ])$, for every element $a\in X$, the unique solution of the equation $[aza]=a$  is called the \emph{skew element to} $a$ and is denoted by $\bar{a}$.

An operation $[\, ] \colon X^3 \to X$ is \emph{semi-commutative} if for all $a,b,c\in X$
\[
[abc]=[cba].
\]
It follows that semi-commutative flocks are ternary groups.

In \cite{NPZ19}, we obtained a precise characterization of knot-theoretic ternary groups, and applied it to the theory of flat links on possibly non-orientable surfaces. One of the main results of \cite{NPZ19} is as follows:

\begin{theorem}[{\cite[Corollary 4.5]{NPZ19}}]\label{mainchartgr}
Each knot-theoretic ternary group $(A,[\, ])$ is determined by an abelian group $(A,+)$ and an element $a\in A$ which is either zero (in the idempotent case) or has order two in $(A,+)$. 
For every $x,y,z\in A$
\[
[xyz]=x-y+z+a\quad {\rm and}\quad \bar{x}=x+a.
\]
\end{theorem}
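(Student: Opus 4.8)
The plan is to establish the characterization in both directions. The converse direction---that $[xyz]=x-y+z+a$ with $2a=0$ always yields a knot-theoretic ternary group---is a routine verification. Associativity is checked directly and holds precisely because $2a=0$: the three bracketings of $[[abc]de]$ each reduce to $-b+c-d+e$ plus an integer multiple of $a$, and these multiples agree modulo $2a=0$. The quasigroup axioms are immediate, since each defining equation is linear in the unknown and is solved by inverting in $(A,+)$. Substituting the formula into LN and RN and using commutativity together with $2a=0$, both sides of each nesting identity collapse to the same expression, so the conditions hold; and $[x\bar x x]=x$ reads $x-\bar x+x+a=x$, giving $\bar x=x+a$.

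For the forward direction I would start from the Hossz\'u--Gluskin representation of ternary groups (available through the theory in \cite{Post}): every ternary group $(A,[\,])$ can be written as
\[
[xyz]=x\cdot\phi(y)\cdot\phi^{2}(z)\cdot t,
\]
where $(A,\cdot)$ is a binary group with identity $e$, $\phi$ is an automorphism of $(A,\cdot)$ satisfying $\phi^{2}(x)=t\,x\,t^{-1}$ and $\phi(t)=t$, and $t\in A$ is fixed. The aim is then to show that the nesting conditions force $(A,\cdot)$ to be abelian, $\phi$ to be the inversion map $x\mapsto x^{-1}$, and $t^{2}=e$.

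The heart of the argument is to substitute this representation into LN (and, symmetrically, into RN) and compare the two sides as words in $(A,\cdot)$, repeatedly using $\phi(t)=t$ and $\phi^{2}(x)=txt^{-1}$ to normalize. A few well-chosen specializations of the four free variables to $e$ should isolate the constraints one at a time. In the already-commutative model the mechanism is transparent: LN reduces to the identity $x\cdot\phi(x)\cdot y\cdot\phi(y)\cdot t^{2}=e$ for all $x,y$; taking $x=y=e$ gives $t^{2}=e$, and then taking $y=e$ gives $\phi(x)=x^{-1}$. The substantive work is to run this analysis without assuming commutativity and to extract that $(A,\cdot)$ must in fact be abelian; this is the step where ternary associativity, as opposed to mere para-associativity, is essential, and it is what makes the ternary-group case land on abelian groups. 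Passing to additive notation and writing $a:=t$ then turns the representation into $[xyz]=x-y+z+a$ with $2a=0$ and $\bar x=x+a$; the idempotent case $[xxx]=x$ is exactly $a=0$, while $a\neq 0$ forces $a$ to have order two.

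The main obstacle I anticipate is precisely this last point inside the core computation: the nesting identities become genuinely nonlinear once the twist $\phi$ and the shift $t$ are present, so that teasing commutativity of $(A,\cdot)$ out of them---rather than assuming it---requires careful bookkeeping and a judicious choice of specializations. Once abelianness is in hand, the remaining deductions (that $\phi$ is inversion and $t^{2}=e$) and the separation into the idempotent and order-two cases are straightforward.
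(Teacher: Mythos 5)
This theorem is imported from \cite[Corollary 4.5]{NPZ19} and carries no proof in the present paper, so the natural benchmark is the paper's proof of its para-associative analogue, Theorem \ref{struct}, which runs: derive the identity $[xyy]=\bar{x}=[yyx]$ from LN and RN (Lemma \ref{special}), feed it into a Gluskin--Hossz\'u-type normal form, and read off the constraints on the twist and the constant. Your converse direction is a correct routine verification, and your starting point for the forward direction --- the Hossz\'u--Gluskin representation $[xyz]=x\cdot\phi(y)\cdot\phi^{2}(z)\cdot t$ with $\phi$ an automorphism, $\phi(t)=t$, $\phi^{2}(x)=txt^{-1}$ --- is the right one.

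The gap is exactly the step you flag as the ``substantive work'': extracting commutativity of $(A,\cdot)$. You propose to expand LN and RN in the noncommutative model and hunt for specializations, but you exhibit none that yields abelianness; your only concrete computation is carried out ``in the already-commutative model'', which is circular for this purpose, and a direct expansion of the four-variable nesting identities with a noncommutative product, a twist $\phi$ and a shift $t$ is not obviously tractable. The missing idea is to first prove the skew-element identity, in parallel with Lemma \ref{special} but using associativity in place of para-associativity: LN rewrites as $[[xyy]cd]=[[x[xyc][xyc]]cd]$, so cancellation gives $[xyy]=[xuu]$ for $u=[xyc]$, and letting $c$ vary (so that $u$ sweeps all of $X$) shows $[xyy]$ is independent of $y$, whence $[xyy]=[x\bar{x}\bar{x}]=\bar{x}$; dually RN gives $[yyx]=\bar{x}$. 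Substituting into the Hossz\'u--Gluskin form, $x\cdot\phi(y)\cdot\phi^{2}(y)\cdot t=\bar{x}$ independent of $y$ forces $\phi(y)\phi^{2}(y)=e$, hence $\phi(z)=z^{-1}$ for all $z$ by surjectivity of $\phi$. Since $\phi$ is an \emph{automorphism} (not an anti-automorphism, as in the flock case), $(xy)^{-1}=x^{-1}y^{-1}$ forces $(A,\cdot)$ to be abelian; then $\phi(t)=t$ gives $t^{2}=e$, and $\bar{x}=[xee]=xt$. This single observation is what makes the associative case land on abelian groups while the para-associative case does not; without it, or a completed noncommutative expansion of LN and RN, your forward direction remains an outline rather than a proof.
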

In particular, we obtained a connection with Takasaki quandles (including dihedral quandles): 
\begin{equation*}\label{Takasaki}
[xy\bar{x}]=x-y+\bar{x}+a=x-y+x+a+a=2x-y.
\end{equation*}

We are now ready to consider the para-associative case.

\section{The structure of knot-theoretic flocks}

In a para-associative groupoid $(X,[\, ])$ an element $a\in X$ is called \emph{special} if
$[aax]=[xaa]$
for all $x\in X$.

In \cite{Dud99} the following theorem, analogous to the Gluskin-Hossz{\'u}
theorem (\cite{Hos63, DudGla, Glu65, Post}), was proven.

\begin{theorem}[{\cite[Proposition 4.5]{Dud99}}]\label{HGflock}
A para-associative groupoid $(X,[\, ])$ with a special element is a flock if and only if there exists a binary group $(X,\cdot)$ and its anti-automorphism $\theta$ such that $\theta^2(x)=x$, and
\begin{equation}
[xyz]=x\cdot\theta(y)\cdot z\cdot b
\end{equation}
for all $x$, $y$, $z\in X$, where $b$ is a central element of $(X,\cdot)$
such that $\theta(b)=b$.
\end{theorem}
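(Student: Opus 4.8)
The plan is to prove both directions of the equivalence, with the forward direction (a para-associative groupoid with a special element that happens to be a flock yields the stated formula) being the substantive one. I would begin by exploiting the special element $a$ and the quasigroup property to manufacture a binary group structure out of the ternary operation. The natural guess, following the Gluskin--Hossz\'u strategy, is to define a binary multiplication by fixing the special element in the middle (or outer) slots: set $x\cdot z := [xaz]$ for a suitably chosen reference element, or more carefully $x\cdot z := [x\,e\,z]$ where $e$ is the two-sided neutral element one extracts from solving $[aze]=\ldots$. I expect to first locate an identity-like element for this binary operation using unique solvability (equation \eqref{eq:m}), then verify that $(X,\cdot)$ is a group; associativity of $\cdot$ should follow by feeding the para-associative law into the definition and collapsing the nested brackets against the special element.

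\medskip

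Next I would construct the anti-automorphism $\theta$ and the central element $b$. The map $\theta$ should be defined by placing $y$ in the reflected slot, roughly $\theta(y) := $ (the element that makes $[x\,y\,z]$ match $x\cdot\theta(y)\cdot z\cdot b$ when $x,z$ are neutral), and I would read off $b$ as the value $[eee]$ for the neutral element $e$ of $(X,\cdot)$. The involutivity $\theta^2 = \mathrm{id}$ and the anti-automorphism property $\theta(uv)=\theta(v)\theta(u)$ are exactly where para-associativity (as opposed to ordinary associativity) does its work: the reversed order $[dcb]$ in the middle term of the para-associative identity is what forces $\theta$ to reverse products rather than preserve them. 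I would check $\theta(b)=b$ and the centrality of $b$ by specializing the para-associative identity and the definition of the special element to neutral arguments.

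\medskip

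The converse direction is the routine verification: given a group $(X,\cdot)$, an involutive anti-automorphism $\theta$, and a central $\theta$-fixed element $b$, I would define $[xyz]:=x\cdot\theta(y)\cdot z\cdot b$ and check directly that it is a para-associative ternary quasigroup with a special element. Quasigroup solvability follows because left/right translations in a group are bijections and $\theta$ is a bijection. Para-associativity is a short computation: expanding $[[abc]de]$, $[a[dcb]e]$, and $[ab[cde]]$ all reduce to $a\cdot\theta(b)\cdot c\cdot\theta(d)\cdot e\cdot\theta(b)\cdot b$ once one uses $\theta$ anti-multiplicative, $\theta^2=\mathrm{id}$, and the centrality of $b$ with $\theta(b)=b$; the reversal in the middle term is precisely absorbed by $\theta$ being an anti-automorphism. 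That $a$-is-special holds amounts to $[aax]=[xaa]$, which reduces to a statement about $b$ and $\theta(a)$ commuting appropriately.

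\medskip

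The main obstacle I anticipate is the forward direction's bookkeeping: extracting a \emph{single} binary group operation and a \emph{single} central element $b$ from the ternary data in a way that is independent of the reference choices, and then proving that the recovered $\theta$ is genuinely an anti-automorphism rather than merely a bijection. This hinges on using the special-element hypothesis $[aax]=[xaa]$ together with para-associativity in just the right combination; getting the centrality and $\theta$-invariance of $b$ to fall out simultaneously is the delicate point. Since the statement is quoted from \cite{Dud99} as an analogue of the Gluskin--Hossz\'u theorem, I would model the neutral-element and $\theta$-construction steps on that classical $n$-ary group argument, adapting each use of ordinary associativity to the para-associative (order-reversing) identity.
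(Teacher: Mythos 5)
The paper does not prove this statement: it is quoted verbatim from Dudek \cite[Proposition 4.5]{Dud99} and used as a black box, so there is no internal proof to compare your plan against. Judged on its own, your outline is the standard Gluskin--Hossz\'u adaptation and it does go through. Concretely, with $a$ the special element and $\bar a$ its skew element one takes $x\cdot y:=[x\bar a y]$, which is a group with identity $a$ (associativity from $[[x\bar a y]\bar a z]=[x\bar a[y\bar a z]]$, neutrality from $[x a\bar a]=x=[\bar a a x]$, inverses from solvability); then $\theta(y):=[ay\bar a]$ and $b:=[aaa]$. The delicate points you flag do resolve exactly as you predict: the middle clause $[a[dcb]e]=[[abc]de]$ forces $\theta(u\cdot v)=\theta(v)\cdot\theta(u)$ and $\theta^2=\mathrm{id}$, while the special-element identity $[aax]=[xaa]$ is precisely what yields both $[aaz]=z\cdot b$ and the centrality of $b$ (via $b\cdot z=[aa z]=[zaa]=z\cdot b$) together with $\theta(b)=b$. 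So the plan is sound, not merely plausible.

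Two small cautions. First, your candidate $x\cdot z:=[xaz]$ (special element itself in the middle slot) produces a group whose identity is $\bar a$ rather than $a$; the cleaner choice is $[x\bar a z]$, and you should fix one convention before defining $\theta$ and $b$, since the formulas $\theta(y)=[ay\bar a]$, $b=[aaa]$ are tied to that choice. Second, in your verification of para-associativity for the converse the claimed common value ``$a\cdot\theta(b)\cdot c\cdot\theta(d)\cdot e\cdot\theta(b)\cdot b$'' is garbled by the clash between the variable $b$ and the central element: the three expansions all equal $a\cdot\theta(b)\cdot c\cdot\theta(d)\cdot e\cdot k^{2}$ where $k$ denotes the central element, and note that this direction needs only centrality and $\theta(k)=k$, not $k^{2}=e$ (the order-two condition only enters later, in Theorem \ref{struct}). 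These are notational slips in a sketch rather than gaps in the argument.
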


First, we will show that knot-theoretic flocks satisfy the requirement of existence of a special element from the above theorem.

\begin{lemma}[{\cite[Section 3]{Dud99}}]
For any elements $x$, $y$, $z$ of a flock $(X,[\, ])$
\begin{align*}
&[y\bar{x}x]=[yx\bar{x}]=[x\bar{x}y]=[\bar{x}xy]=y,\\
&\bar{\bar{x}}=x,\\
&\overline{[xyz]}=[\bar{x}\,\bar{y}\,\bar{z}].
\end{align*}
\end{lemma}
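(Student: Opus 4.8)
The plan is to run everything off two facts: para-associativity in its threefold form $[[abc]de]=[a[dcb]e]=[ab[cde]]$, together with the observation that in a ternary quasigroup each of the translations $z\mapsto[zab]$, $z\mapsto[azb]$, $z\mapsto[abz]$ is a bijection. The defining property of the skew element is $[x\bar{x}x]=x$, and the recurring device will be to exhibit a translation that is idempotent under composition; being a bijection, it must then be the identity map.

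First I would establish the two ``inner'' cancellation laws $[y\bar{x}x]=y$ and $[x\bar{x}y]=y$. For the first, set $\phi(w)=[w\bar{x}x]$ and compute, using $[[abc]de]=[ab[cde]]$, that $\phi(\phi(y))=[[y\bar{x}x]\bar{x}x]=[y\bar{x}[x\bar{x}x]]=[y\bar{x}x]=\phi(y)$. Since $\phi$ is the bijection $z\mapsto[z\bar{x}x]$, applying $\phi^{-1}$ gives $\phi=\mathrm{id}$, i.e. $[y\bar{x}x]=y$. Symmetrically, with $\chi(w)=[x\bar{x}w]$ and the identity $[ab[cde]]=[[abc]de]$ one gets $\chi(\chi(w))=[[x\bar{x}x]\bar{x}w]=\chi(w)$, whence $[x\bar{x}y]=y$.

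The key step is $\bar{\bar{x}}=x$, and here the two laws just proved are not yet symmetric enough to read off both remaining cancellations, so I would first derive an auxiliary identity. Feeding $[x\bar{x}a]=a$ into the middle form of para-associativity gives $[aST]=[x[Sa\bar{x}]T]$ for all $a,S,T$; specializing to $a=\bar{x}$, $S=x$, $T=\bar{x}$ and simplifying the inner triple with $[x\bar{x}\bar{x}]=\bar{x}$ yields $[\bar{x}x\bar{x}]=\bar{x}$. Since $\bar{\bar{x}}$ is by definition the unique solution $z$ of $[\bar{x}z\bar{x}]=\bar{x}$, uniqueness forces $x=\bar{\bar{x}}$. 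With $\bar{\bar{x}}=x$ available, the remaining cancellations $[yx\bar{x}]=y$ and $[\bar{x}xy]=y$ follow at once by substituting $\bar{x}$ for $x$ in the first two laws, because the skew of $\bar{x}$ is $x$.

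Finally, for $\overline{[xyz]}=[\bar{x}\,\bar{y}\,\bar{z}]$ I would verify that $w=[\bar{x}\,\bar{y}\,\bar{z}]$ solves the skew equation $[[xyz]\,w\,[xyz]]=[xyz]$ and then invoke uniqueness of the skew element. Applying para-associativity to the outer bracket turns $[[xyz][\bar{x}\bar{y}\bar{z}][xyz]]$ into $[x\,[[\bar{x}\bar{y}\bar{z}]zy]\,[xyz]]$; the middle triple collapses via $[\bar{z}zy]=y$ and then $[\bar{x}\bar{y}y]=\bar{x}$ down to $\bar{x}$, leaving $[x\bar{x}[xyz]]=[xyz]$ by the cancellation law. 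The main obstacle is essentially bookkeeping: keeping the five arguments of para-associativity in the correct slots through these nested substitutions, and ensuring each cancellation is applied in a direction already proven — which is exactly why the order (the two inner laws, then $\bar{\bar{x}}=x$, then the mirror laws, then the final identity) matters.
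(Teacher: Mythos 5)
Your argument is correct and complete: the idempotent-bijection device ($\phi\circ\phi=\phi$ for a quasigroup translation forces $\phi=\mathrm{id}$) cleanly yields $[y\bar{x}x]=y$ and $[x\bar{x}y]=y$, the auxiliary identity $[aST]=[[x\bar{x}a]ST]=[x[Sa\bar{x}]T]$ correctly gives $[\bar{x}x\bar{x}]=\bar{x}$ and hence $\bar{\bar{x}}=x$ by uniqueness of the skew element, and the final computation for $\overline{[xyz]}$ checks out, with each cancellation applied only after it has been established. Note that the paper itself offers no proof of this lemma --- it is quoted from \cite[Section 3]{Dud99} --- so there is nothing in the text to compare against; your proof stands as a valid self-contained derivation from para-associativity and the quasigroup axioms alone.
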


\begin{lemma} \label{special}
Let $(X,[\, ])$ be a knot-theoretic flock. Then
every element $x\in X$ is special.
\end{lemma}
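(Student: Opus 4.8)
The plan is to squeeze usable consequences out of the two nesting conditions by a single well-chosen substitution, $c=\bar b$, which activates the skew-element identities from the previous lemma and collapses both LN and RN almost entirely. The key observation that makes everything work is that, after this substitution, LN and RN each produce the term $[a\bar b d]$, so the two can be glued together.

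First I would specialize LN. Setting $c=\bar b$ and using $[b\bar b d]=d$ and $[ab\bar b]=a$, the left-hand side $[ab[bcd]]$ reduces to $[abd]$, while on the right-hand side $[abc]=a$ forces $[[abc]cd]=[a\bar b d]$, so the whole right-hand side becomes $[aa[a\bar b d]]$. Thus LN gives $[abd]=[aa[a\bar b d]]$ for all $a,b,d$. Replacing $b$ by $\bar b$, which is legitimate since $\bar{\bar b}=b$, yields the equivalent form $[a\bar b d]=[aa[abd]]$. Next I would specialize RN in exactly the same way: with $c=\bar b$ the left-hand side $[[abc]cd]$ reduces to $[a\bar b d]$, and since $[bcd]=d$ and $[ab[bcd]]=[abd]$, the right-hand side becomes $[[abd]dd]$. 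Hence RN gives $[a\bar b d]=[[abd]dd]$.

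Now both specializations express the same element $[a\bar b d]$, so equating them yields $[aa[abd]]=[[abd]dd]$. The final step is to promote this from the special elements of the form $[abd]$ to all of $X$. For fixed $a$ and $d$, the middle-variable solvability axiom \eqref{eq:m} says that $z\mapsto[azd]$ is a bijection of $X$, so as $b$ ranges over $X$ the element $[abd]$ ranges over all of $X$. Writing $t=[abd]$ therefore gives $[aat]=[tdd]$ for every $t\in X$ and all $a,d\in X$. Taking $a=d$ collapses this to $[aat]=[taa]$ for all $t$, which is precisely the statement that every $a\in X$ is special.

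I expect the only genuine obstacle to be discovering the substitution $c=\bar b$ and recognizing that the two reduced identities share the common term $[a\bar b d]$; once that is seen, the remaining manipulations are immediate applications of the lemma and the quasigroup axiom, and no clever cancellation in a hypothetical underlying group is needed. It is worth noting that the argument in fact delivers the stronger fact that $[aat]=[tdd]$ holds for all $a,d,t$ (so this common value depends only on $t$), which is consistent with the known ternary-group case $[xyz]=x-y+z+a$, where both sides equal $t+a$; the specialization $a=d$ is all that the lemma requires.
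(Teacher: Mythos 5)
Your proof is correct: the substitutions $c=\bar b$ into LN and RN are legitimate, the collapses via $[b\bar b d]=d$ and $[ab\bar b]=a$ are exactly the identities supplied by the preceding lemma, and the passage from $[aa[abd]]=[[abd]dd]$ to $[aat]=[tdd]$ for all $t$ via surjectivity of the middle translation $z\mapsto[azd]$ is a valid use of axiom \eqref{eq:m}. The route is genuinely different from the paper's. The paper never combines LN and RN: it first uses para-associativity to rewrite each side of RN (resp.\ LN) in the form $[d(\cdot)a]$ (resp.\ $[a(\cdot)d]$), cancels the middle entry by uniqueness in \eqref{eq:m} to get $[bbc]=[[cba][cba]c]$ (resp.\ $[cbb]=[c[abc][abc]]$), and only then substitutes $\bar b$ for $c$; this yields the sharper identity $[aab]=\bar b=[baa]$, which is recorded as equation (\ref{skewfl}) and reused in the subsequent Proposition and in Theorem \ref{struct}. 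You instead substitute $\bar b$ first, which makes both nesting conditions collapse without any explicit para-associativity manipulation, and you replace the cancellation step by a surjectivity argument. Your method is arguably more economical for the lemma as stated and even gives the extra information that $[aat]=[tdd]$ for all $a,d,t$; its one drawback is that it does not identify this common value as $\bar t$, so if one wants equation (\ref{skewfl}) for the later results, a small additional argument (or the paper's computation) is still needed.
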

\begin{proof}
From para-associativity and the condition RN, for any $a$, $b$, $c$, $d\in X$ we have
\[
[d[bbc]a]=[[dcb]ba]=[[dc[cba]][cba]a]=[d[[cba][cba]c]a].
\]
From the uniqueness of solution of equation (\ref{eq:m}) it follows that
\[
[bbc]=[[cba][cba]c].
\]
Substituting $\bar{b}$ for $c$, we get
\[
b=[bb\bar{b}]=[[\bar{b}ba][\bar{b}ba]\bar{b}]=[aa\bar{b}],
\]
and after substituting $\bar{b}$ for $b$, we obtain $\bar{b}=[aab]$.
Similarly, using the condition LN and para-associativity:
\[
[a[cbb]d]=[ab[bcd]]=[a[abc][[abc]cd]]=[a[c[abc][abc]]d].
\]
It follows that
\[
[cbb]=[c[abc][abc]],
\]
and after substituting $\bar{b}$ for $c$, we get
\[
b=[\bar{b}bb]=[\bar{b}[ab\bar{b}][ab\bar{b}]]=[\bar{b}aa],
\]
which is equivalent to $\bar{b}=[baa]$.
To summarize, for any $a$, $b\in X$,
\begin{equation}
[aab]=\bar{b}=[baa], \label{skewfl}
\end{equation}
that is, every element is special.
\end{proof}

Note that if $\bar{x}=x$ for all $x\in X$, then such knot-theoretic flocks are examples of heaps (\cite{Wag53, HoLa17}), i.e. para-associative ternary groupoids $(X,[\, ])$ satisfying $[aax]=x=[xaa]$ for all $a$, $x\in X$.

\begin{proposition}
A flock $(X,[\,])$ in which the equation (\ref{skewfl}) is satisfied for any $x$, $y\in X$ is knot-theoretic.
\end{proposition}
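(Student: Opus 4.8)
\emph{Proof plan.} This proposition is exactly the converse of Lemma \ref{special}: assuming a flock $(X,[\,])$ satisfies the identity $[xxy]=\bar{y}=[yxx]$ of equation (\ref{skewfl}) for all $x,y\in X$, I must verify the two nesting conditions LN and RN. The whole idea is that in each of these conditions both sides are doubly-nested brackets whose \emph{outer} nesting can be flattened by para-associativity into a single inner triple of the special shape $[uuv]$ or $[uvv]$; once in that shape, equation (\ref{skewfl}) immediately rewrites the inner triple as a skew element, and both sides collapse to one and the same expression. No quasigroup cancellation beyond the definition of $\bar{\cdot}$ is needed.

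First I would treat RN, i.e. $[[abc]cd]=[[ab[bcd]][bcd]d]$. On the left, the middle form of para-associativity, $[[abc]de]=[a[dcb]e]$, applied with $e=d$ and the standalone slot equal to $c$, gives $[[abc]cd]=[a[ccb]d]$, and $[ccb]=\bar{b}$ by (\ref{skewfl}). On the right, writing $q=[bcd]$, the same identity applied to $[[abq]qd]$ yields $[a[qqb]d]$, and again $[qqb]=\bar{b}$. Hence both sides equal $[a\bar{b}d]$, so RN holds.

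For LN, $[ab[bcd]]=[a[abc][[abc]cd]]$, I would use instead the right-hand form $[ab[cde]]=[[abc]de]$. The left side $[ab[bcd]]$ becomes $[[abb]cd]=[\bar{a}cd]$, using the variant $[abb]=\bar{a}$ of (\ref{skewfl}). Setting $p=[abc]$, the right side $[ap[pcd]]$ flattens to $[[app]cd]=[\bar{a}cd]$ in the same way. Thus both sides equal $[\bar{a}cd]$ and LN holds as well. The only real point of care, and the place where a sign-of-attention error is easy, is bookkeeping: one must apply the correct form of para-associativity (remembering the order reversal $[dcb]$ in the middle form) and then invoke the correct variant of (\ref{skewfl})—namely $[uuv]=\bar{v}$ versus $[uvv]=\bar{u}$—matching the position of the repeated argument produced by the flattening. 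Once the bracket shapes are matched correctly, the collapse to $[a\bar{b}d]$ and $[\bar{a}cd]$ is forced, and there is no residual computation.
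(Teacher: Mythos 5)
Your proof is correct and follows essentially the same strategy as the paper's: flatten each doubly-nested side via para-associativity into a triple containing a repeated argument, then collapse it using the identity (\ref{skewfl}), so that both sides of RN reduce to $[a\bar{b}d]$ (exactly as in the paper) and both sides of LN reduce to a common expression. The only cosmetic difference is that for LN you use the outer form $[ab[cde]]=[[abc]de]$ to land on $[\bar{a}cd]$, whereas the paper uses the middle form to pass through $[a\bar{c}d]$; both are valid instances of the same idea.
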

\begin{proof}
Using $[yxx]=\bar{y}$ and para-associativity, we obtain the condition LN as follows:
\[
[ab[bcd]]=[a[cbb]d]=[a\bar{c}d]=[a[c[abc][abc]]d]=[a[abc][[abc]cd]].
\]

The equation $[xxy]=\bar{y}$, for $x$, $y\in X$, yields the condition RN:
\[
[[abc]cd]=[a[ccb]d]=[a\bar{b}d]=[a[[bcd][bcd]b]d]=[[ab[bcd]][bcd]d].
\]
\end{proof}

Now we obtain a characterization of knot-theoretic flocks.

\begin{theorem}\label{struct}
A ternary groupoid $(X,[\, ])$ is a knot-theoretic flock if and only if there exists a binary group $(X,\cdot)$ such that for any $x$, $y$, $z\in X$
\begin{equation}\label{knfl}
[xyz]=x\cdot y^{-1}\cdot z\cdot b,
\end{equation}
where $b$ is either the identity $e$ of the group $(X,\cdot)$, or is a central element of order two in $(X,\cdot)$. 
\end{theorem}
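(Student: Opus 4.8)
The plan is to prove the two implications separately, using Dudek's Theorem~\ref{HGflock} as the engine for the substantive direction and a short check for the converse. For the forward direction, suppose $(X,[\,])$ is a knot-theoretic flock. By Lemma~\ref{special} every element is special, so $(X,[\,])$ is in particular a flock possessing a special element and Theorem~\ref{HGflock} applies: there exist a group $(X,\cdot)$, an anti-automorphism $\theta$ with $\theta^2=\mathrm{id}$, and a central element $c$ with $\theta(c)=c$, such that $[xyz]=x\cdot\theta(y)\cdot z\cdot c$. All that remains is to show that being knot-theoretic forces $\theta$ to be inversion and $c$ to have order at most two, after which I rename $c$ as $b$ to match the statement.

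The key computation is to express the skew element in group terms. Solving $[xzx]=x$, i.e. $x\cdot\theta(z)\cdot x\cdot c=x$, gives $\theta(z)=c^{-1}x^{-1}$, and since $\theta$ is an involutive anti-automorphism fixing $c$ this yields $\bar{x}=\theta(x)^{-1}\cdot c^{-1}$. Now Lemma~\ref{special} tells us that identity~(\ref{skewfl}) holds, so $[xxy]=\bar{y}$ for all $x,y$, that is $x\cdot\theta(x)\cdot y\cdot c=\theta(y)^{-1}\cdot c^{-1}$. Putting $y=e$ and using $\theta(e)=e$ gives $x\cdot\theta(x)=c^{-2}$ for every $x$; putting in addition $x=e$ gives $c^2=e$, whence $x\cdot\theta(x)=e$, i.e. $\theta(x)=x^{-1}$. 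The condition $\theta(c)=c$ then reads $c^{-1}=c$, consistent with $c^2=e$. Thus $[xyz]=x\cdot y^{-1}\cdot z\cdot c$ with $c$ central and $c^2=e$, so $c$ is either $e$ or a central element of order two, which is exactly~(\ref{knfl}).

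For the converse, given a group $(X,\cdot)$ and a central element $b$ with $b^2=e$, define $[xyz]=x\cdot y^{-1}\cdot z\cdot b$. This is precisely the normal form of Theorem~\ref{HGflock} with the involutive anti-automorphism $\theta(x)=x^{-1}$ and central element $b$ satisfying $\theta(b)=b^{-1}=b$, so $(X,[\,])$ is a flock by that theorem (alternatively one checks the quasigroup axioms and para-associativity directly, the latter being a one-line computation where centrality of $b$ collects the two occurrences of $b$ and $b^2=e$ makes the middle term $[x[uzy]v]$ agree with the outer two). It then remains only to verify~(\ref{skewfl}): since $\bar{x}=xb$ one computes $[xxy]=x\cdot x^{-1}\cdot y\cdot b=yb=\bar{y}$ and likewise $[yxx]=yb=\bar{y}$, so the preceding Proposition shows $(X,[\,])$ is knot-theoretic.

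The main obstacle is the forward direction, and within it the correct extraction of $\theta$: everything hinges on computing $\bar{x}$ from Dudek's normal form without mishandling the anti-automorphism, and then evaluating the special identity at the identity element to collapse $\theta$ to inversion and simultaneously force $b^2=e$. Once that is done the converse is routine bookkeeping, the only point worth flagging being that $b^2=e$ and centrality must be used \emph{together} to secure para-associativity.
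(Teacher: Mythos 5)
Your proof is correct and follows essentially the same route as the paper: invoke Dudek's Theorem~\ref{HGflock} via Lemma~\ref{special}, then use the skew-element identity~(\ref{skewfl}) evaluated at the identity to force $\theta(x)=x^{-1}$ and $b^2=e$. The only (harmless) differences are that you compute $\bar{x}=\theta(x)^{-1}c^{-1}$ in general before specializing, where the paper just computes $\bar e=b$ and $[yye]$, and that you close the converse by verifying~(\ref{skewfl}) and citing the Proposition that a flock satisfying it is knot-theoretic --- a slightly tidier route than the paper's direct check of LN and RN.
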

\begin{proof}
From Theorem \ref{HGflock} and Lemma \ref{special}
\[
[xyz]=x\cdot\theta(y)\cdot z\cdot b,
\]
where $\theta$ is an anti-automorphism such that $\theta(b)=b$, and $b$ is in the center of the group $(X,\cdot)$.
From (\ref{skewfl}) we have
\[
\bar{e}=[eee]=e\cdot \theta(e)\cdot e\cdot b=b.
\]
Then, for any $y\in X$,
\[
b=\bar{e}=[yye]=y\cdot \theta(y)\cdot e\cdot b.
\]
Thus, $e=y\cdot\theta(y)$, that is, $\theta(y)=y^{-1}$. Since $\theta(b)=b$, it follows that $b^2=e$. Note that, for any $x\in X$,
\[
\bar{x}=[xee]=x\cdot e^{-1}\cdot e\cdot b=x\cdot b.
\]
Now suppose that a ternary groupoid $(X,[\, ])$ has an operation of the form (\ref{knfl}). Then it is a ternary quasigroup and satisfies the para-associative condition:
\begin{align*}
&[[xyz]vw]=(x\cdot y^{-1}\cdot z\cdot b)\cdot v^{-1}\cdot w\cdot b=\\
&[x[vzy]w]=x\cdot(v\cdot z^{-1}\cdot y\cdot b)^{-1}\cdot w\cdot b=\\
&[xy[zvw]]=x\cdot y^{-1}\cdot (z\cdot v^{-1}\cdot w\cdot b)\cdot b,
\end{align*}
since $b$ is in the center of the group $(X,\cdot)$ and of order one or two.
The nesting conditions LN and RN are also easy to check. 
\end{proof}

Note that one can obtain the core group operation (see e.g. \cite{Wada} for details) in a knot-theoretic flock by taking
\begin{equation*}
[xy\bar{x}]=x\cdot y^{-1}\cdot \bar{x}\cdot b=x\cdot y^{-1}\cdot x\cdot b\cdot b
=x\cdot y^{-1}\cdot x.
\end{equation*} 

By Theorem \ref{struct} each knot-theoretic flock $(X,[\,])$ is defined by a group $(X,\cdot)$ and an element $b\in Z(X)$ of order one or two. In this case, we write $(X,[\,])=\mathcal{F}((X,\cdot),b)$ and call the group $(X,\cdot)$ \emph{associated} to the knot-theoretic flock $(X,[\,])$.
The next result shows the relationship between isomorphic knot-theoretic flocks and their associated groups and central elements.

\begin{theorem} \label{isomorphism}
Let $(X_1,[\, ]_1)=\mathcal{F}((X_1,\cdot),b_1)$ and $(X_2,[\, ]_2)=\mathcal{F}((X_2,\ast),b_2)$ be two knot-theoretic flocks.
Then knot-theoretic flocks $(X_1,[\, ]_1)$ and $(X_2,[\, ]_2)$ are isomorphic if and only if there exists a group isomorphism $f\colon (X_1,\cdot)\to (X_2,\ast)$ such that $b_2=f(b_1)$.
\end{theorem}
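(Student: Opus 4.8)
The plan is to treat the two implications separately, with essentially all of the work in the forward direction.

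For the backward implication, suppose $f\colon(X_1,\cdot)\to(X_2,\ast)$ is a group isomorphism with $f(b_1)=b_2$. I would simply apply $f$ to the defining formula (\ref{knfl}): since $f$ is a homomorphism and carries $b_1$ to $b_2$,
\[
f([xyz]_1)=f(x\cdot y^{-1}\cdot z\cdot b_1)=f(x)\ast f(y)^{-1}\ast f(z)\ast b_2=[f(x)f(y)f(z)]_2,
\]
so $f$ is also a flock isomorphism. This is routine.

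The forward implication is the substantive one. Suppose $g\colon (X_1,[\, ]_1)\to(X_2,[\, ]_2)$ is a flock isomorphism; I must manufacture a group isomorphism respecting the central elements. Two preliminary observations drive the argument. First, because the skew element $\bar x$ is characterized purely flock-theoretically as the unique solution of $[xzx]=x$, every flock isomorphism preserves it: $g(\bar x)=\overline{g(x)}$. Second, the group multiplication can be recovered from the flock operation, since by (\ref{knfl}) and $b_1^2=e_1$ one has $x\cdot y=[x\,e_1\,\bar y]_1$, where $e_1$ is the identity of $(X_1,\cdot)$. The one nuisance is that $g$ need not send $e_1$ to the identity $e_2$ of $(X_2,\ast)$, so I would set $c:=g(e_1)$ and define the corrected map $f(x):=c^{-1}\ast g(x)$, which is a bijection with $f(e_1)=e_2$.

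It then remains to verify that $f$ is multiplicative and matches the central elements. Applying $g$ to $x\cdot y=[x\,e_1\,\bar y]_1$, using skew-preservation and $g(e_1)=c$, gives $g(x\cdot y)=[g(x)\,c\,\overline{g(y)}]_2=g(x)\ast c^{-1}\ast g(y)$, where the trailing copies of $b_2$ cancel because $b_2^2=e_2$; multiplying on the left by $c^{-1}$ shows $f(x\cdot y)=f(x)\ast f(y)$. Finally, skew-preservation yields $f(b_1)=c^{-1}\ast g(\bar e_1)=c^{-1}\ast\overline{c}=c^{-1}\ast c\ast b_2=b_2$. I expect the main obstacle to be precisely this forward direction: the bookkeeping needed to reconstruct the group operation intrinsically and to compensate for $g(e_1)\neq e_2$. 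The decisive step is the untwisting $f(x)=g(e_1)^{-1}\ast g(x)$: once this is shown to be a homomorphism, the compatibility $f(b_1)=b_2$ comes for free from the facts that $b_i=\overline{e_i}$ and that $g$ preserves skew elements.
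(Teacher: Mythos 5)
Your proof is correct and follows essentially the same route as the paper's: both recover the group multiplication from the ternary operation (the paper via $x\cdot y=[xb_1y]_1$, you via $x\cdot y=[x\,e_1\,\bar y]_1$ together with preservation of skew elements), push it through the flock isomorphism, and then untwist by a translation by the image of a distinguished element (the paper right-translates by $(h(b_1))^{-1}\ast b_2$, you left-translate by $g(e_1)^{-1}$; since $h(b_1)=h(e_1)\ast b_2$ and $b_2$ is a central involution, the two resulting maps differ only by an inner automorphism, and both send $b_1$ to $b_2$). The only other difference is that you verify the easy direction by direct computation, whereas the paper cites Dudek.
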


\begin{proof}
The implication ``$\Leftarrow$" directly follows by \cite[Proposition 4.10]{Dud99}.

To prove the converse let $h\colon (X_1,[\, ]_1)\to (X_2,[\, ]_2)$ be a flock isomorphism. This means that for $x,y\in X_1$ we have
\begin{align}\label{eq:ktfiso}
&h(x\cdot y)=h(x\cdot b_1^{-1}\cdot y \cdot b_1)= h([xb_1y]_1)=[h(x)h(b_1)h(y)]_2=h(x)\ast (h(b_1))^{-1}\ast h(y)\ast b_2.
\end{align}
(The first equality holds by centrality of the element $b_1$.)

Let us define the mapping
\[f\colon X_1\to X_2, \quad x\mapsto h(x)\ast (h(b_1))^{-1}\ast b_2.
\]
Clearly, $f(b_1)=h(b_1)\ast (h(b_1))^{-1}\ast b_2=b_2$. We will show that $f$ is a group isomorphism. Since $h\colon X_1\to X_2$ is a bijection, then $f$ is a bijection, too.  Further, by \eqref{eq:ktfiso} and centrality of the element $b_2\in X_2$ we immediately obtain for $x,y\in X_1$:
\begin{align*}
&f(x\cdot y)=h(x\cdot y)\ast (h(b_1))^{-1}\ast b_2\stackrel{ \eqref{eq:ktfiso}}=h(x)\ast (h(b_1))^{-1}\ast h(y)\ast b_2\ast (h(b_1))^{-1}\ast b_2=\\
&h(x)\ast (h(b_1))^{-1}\ast b_2\ast h(y)\ast (h(b_1))^{-1}\ast b_2=f(x)\ast f(y),
\end{align*}
which finishes the proof.
\end{proof}

Using Theorem \ref{isomorphism} and GAP \cite{GAP2019} (in particular, the Small Groups library), we were able to enumerate the knot-theoretic flocks obtained from non-abelian groups, up to order 64. Table \ref{count_all_nonab} lists the orders for which such flocks exist. For the ones obtained from abelian groups (that is, for knot-theoretic ternary groups), see \cite{NPZ19}. 

\begin{table}
$$\begin{array}{|r|cccccccccccccccccc|}\hline
n &6 &8 &10 &12 &14 &16 &18 &20 &21 &22 &24 &26 &27 &28 &30 &32 &34 &36\\\hline
\text{all}&1 &4 &1 &5 &1 &23 &3 &5 &1 &1 &24 &1 &2 &4 &3 &127 &1 &16\\
\text{idempotent}&1 &2 &1 &3 &1 &9 &3 &3 &1 &1 &12 &1 &2 &2 &3 &44 &1 &10\\\hline
\end{array}$$

$$\begin{array}{|r|cccccccccccccccccc|}\hline
n &38 &39 &40 &42 &44 &46 &48 &50 &52 &54 &55 &56 &57 &58 &60 &62 &63 &64\\\hline
\text{all} &1 &1 &23 &6 &4 &1 &112 &3 &5 &14 &1 &20 &1 &1 &17 &1 &2 &886\\
\text{idempotent} &1 &1 &11 &5 &2 &1 &47 &3 &3 &12 &1 &10 &1 &1 &11 &1 &2 &256\\\hline
\end{array}$$
\caption{The number of knot-theoretic flocks of size $n$, obtained from non-abelian groups, up to isomorphism.}
\label{count_all_nonab}
\end{table}

\subsection{A generalization to extra loops}

A {\it binary quasigroup} is a groupoid $(Q,*)$ such that the equation $x*y=z$ has a unique solution in $Q$ whenever two
of the three elements $x$, $y$, $z$ of $Q$ are specified. A {\it loop} $(L,*)$ is a quasigroup with an identity element $e$ such that $x*e=x=e*x$, for all $x\in L$. See, for example, \cite{Pfl90} for an introduction to the theory of loops and binary quasigroups.
An {\it extra loop} is a loop $(L,*)$ satisfying one of the following equivalent conditions:
\begin{align*}
&1.\ (x*(y*z))*y=(x*y)*(z*y),\\
&2.\ (y*z)*(y*x)=y*((z*y)*x),\\
&3.\ ((x*y)*z)*x=x*(y*(z*x)),
\end{align*}
for all $x$, $y$, and $z\in L$. Any group is an extra loop. A classical example of an extra loop that is not a group is as follows. Let $(G,\cdot)$ be a group, and $M(G,2)$ be the set 
$G\times\{0,1\}$ equipped with the operation
$(g,0)*(h,0)=(gh,0)$, $(g,0)*(h,1)=(hg,1)$, $(g,1)*(h,0)=(gh^{-1},1)$, and $(g,1)*(h,1)=(h^{-1}g,0)$. Then $(M(G,2),*)$ is a nonassociative Moufang loop if and only if $(G,\cdot)$ is nonabelian, and $(M(D_4,2),*)$, where $D_4$ is the dihedral group with eight elements, is an extra loop. The smallest nonassociative extra loops have 16 elements. The structure of extra loops was investigated, for example, in \cite{KiKu04}. In extra loops (and more generally in Moufang loops) the subloop generated by any two elements is 
a group. Elements have their inverses, satisfying the left and the right inverse properties: $x^{-1}*(x*y)=y$ and $(y*x)*x^{-1}=y$. 

We will show that the operation (\ref{knfl}) has its generalization in extra loops, but first we need a few more definitions.
For elements $x$, $y$, $z$ of a loop $L$, their {\it associator} $(x,y,z)\in L$ is defined as the unique element satisfying the equation
\[
(x*y)*z=(x,y,z)*(x*(y*z)).
\]
The {\it nucleus} $N(L)$ of $L$ consists of all elements $x\in L$ such that
\[
(x,y,z)=(y,x,z)=(y,z,x)=e,
\]
for all $y$, $z\in L$. The {\it center} $Z(L)$ is the subloop $\{ x\in N(L): x*y=y*x,\ \textrm{for all}\ y\in L\}$.

\begin{proposition}
Let $(L,*)$ be an extra loop. Then the operations $[xyz]_1=((x*y^{-1})*z)*k$ and 
$[xyz]_2=(x*(y^{-1}*z))*k$, where $k\in Z(L)$ is of order one or two, satisfy the conditions LN and RN. They can be used for defining knot/knotted surface coloring invariants via unoriented diagrams using the coloring scheme from Fig. \ref{Dehngen}(B) and Fig. \ref{surfdehn}. 
\end{proposition}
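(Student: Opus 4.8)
The plan is to strip away the central factor $k$, collapse the two operations to one via a duality, and then reduce the hard condition to a single Moufang-type identity. Throughout I write $e$ for the identity of $L$ and use that extra loops are Moufang, hence diassociative, flexible, and have the two-sided and antiautomorphic inverse properties $(x*y)^{-1}=y^{-1}*x^{-1}$. First I would dispose of $k$: since $k\in Z(L)\subseteq N(L)$ it commutes and associates with everything, so in any nested term every occurrence of $k$ may be pulled outside and the occurrences collected into a single power. A direct count shows that in each of LN and RN the outer operation is applied twice on the left and four times on the right, so the collected factors are $k^2$ and $k^4$; as $k^2=e$ both equal $e$ and cancel. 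Hence it suffices to treat $k=e$, i.e. $[xyz]_1=(x*y^{-1})*z$ and $[xyz]_2=x*(y^{-1}*z)$. Next I would reduce to one operation. Passing to the opposite loop $(L,\bullet)$ with $x\bullet y:=y*x$ (again an extra loop, since the three extra identities are permuted among themselves when all products are reversed), one checks that the reversed operation $[xyz]_2^{r}:=[zyx]_2$ is exactly the $[\,]_1$-operation of $(L,\bullet)$; and a relabelling of $a,b,c,d$ shows that a ternary operation satisfies LN iff its reversal satisfies RN. Thus LN and RN for $[\,]_2$ in every extra loop are equivalent to RN and LN for $[\,]_1$ in every extra loop, and it is enough to treat $[\,]_1$.

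For RN of $[\,]_1$ the computation is immediate from the right inverse property. With $P=[abc]_1=(a*b^{-1})*c$ and $R=[bcd]_1=(b*c^{-1})*d$, the simplifications $((a*b^{-1})*c)*c^{-1}=a*b^{-1}$ and $((a*b^{-1})*R)*R^{-1}=a*b^{-1}$ give
\[
[[abc]_1cd]_1=(a*b^{-1})*d=[[ab[bcd]_1]_1[bcd]_1d]_1,
\]
so RN holds using only the inverse property.

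The main obstacle is LN for $[\,]_1$. After substituting and using the right inverse property to simplify the doubly nested term, $[[abc]_1cd]_1=(a*b^{-1})*d$, the condition LN becomes
\[
(a*b^{-1})*((b*c^{-1})*d)=\bigl(a*((a*b^{-1})*c)^{-1}\bigr)*((a*b^{-1})*d).
\]
Setting $u=a*b^{-1}$ (so $a=u*b$) and applying the antiautomorphic inverse property, the right-hand side equals $((u*b)*(c^{-1}*u^{-1}))*(u*d)$, while the second extra identity, applied to $u*((b*c^{-1})*d)$, rewrites the left-hand side as $(u*((b*c^{-1})*u^{-1}))*(u*d)$. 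Thus LN reduces to the three-variable identity
\[
u*((p*q)*u^{-1})=(u*p)*(q*u^{-1}),\qquad p=b,\ q=c^{-1}.
\]
This differs from the middle Moufang identity only in that $u^{-1}$ replaces $u$ in the two right-hand positions, and that discrepancy is precisely what the squares of $L$ control. Invoking the characterization of extra loops as Moufang loops in which every square is nuclear (see \cite{KiKu04}), I would write $u*X=u^2*(u^{-1}*X)$, apply the middle Moufang identity to $u^{-1}*((p*q)*u^{-1})$, and then carry the nuclear element $u^2$ back across the products; this yields the displayed identity and completes LN.

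Finally, LN and RN for $[\,]_2$ follow by applying the duality above to the opposite extra loop $(L,\bullet)$. Since left and right translations and the inversion map are bijections, both $[\,]_1$ and $[\,]_2$ are ternary quasigroups; together with LN and RN this is exactly the input guaranteeing that the coloring count is invariant under the Reidemeister, Roseman and Yoshikawa moves, as set up earlier and in \cite{Nie17,KimNel}. I expect the reduction to the three-variable identity and its proof via nuclearity of squares to be the only genuinely nonassociative step; the reduction to $k=e$, the $[\,]_1$--$[\,]_2$ duality, and RN for $[\,]_1$ are all bookkeeping with the inverse properties and the central factor.
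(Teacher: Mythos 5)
Your argument is correct, but it takes a genuinely different route from the paper's. The elimination of $k$ by counting occurrences of the ternary operation on the two sides of LN and RN (two on the left, four on the right, hence an even power of the central involution on each side) is exactly the paper's first step; after that, however, the paper simply cites \cite[Lemma 5.7]{Nie14} for LN and RN in the case $k=e$ and disposes of $[\,]_2$ with the remark that the proofs are ``analogous.'' You instead give a self-contained verification: the opposite-loop duality (which exchanges LN and RN under reversal of the ternary operation and identifies the reversal of $[\,]_2$ with the $[\,]_1$ of the opposite extra loop) reduces everything to $[\,]_1$; RN then falls out of the right inverse property alone; and LN is reduced, via the second extra identity and the antiautomorphic inverse property, to the single identity
\[
u*\bigl((p*q)*u^{-1}\bigr)=(u*p)*(q*u^{-1}),
\]
which you prove from the middle Moufang identity and the Kinyon--Kunen characterization of extra loops as Moufang loops with nuclear squares. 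I checked the reductions and the final step (write $u=u^2*u^{-1}$, push the nuclear $u^2$ across, apply the middle Moufang identity at $u^{-1}$, pull $u^2$ back); they are sound. What your route buys is transparency about which axioms carry the weight --- RN needs only the inverse property, while LN genuinely uses nuclearity of squares --- at the cost of length relative to the paper's one-line citation. One small gap on the topological side: for the \emph{unoriented} coloring scheme it is not enough that $[\,]_1$ and $[\,]_2$ be ternary quasigroups; one needs the crossing symmetries $[[abc]_1cb]_1=a$, $[c[abc]_1a]_1=b$ and $[ba[abc]_1]_1=c$, which the paper states and verifies explicitly. These do follow from the left and right inverse properties together with your $k$-cancellation, so the repair is immediate, but as written your final paragraph substitutes the weaker unique-solvability property for what the coloring convention actually requires.
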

\begin{proof}
The operation $[\, ]_1$ generalizes the operation  $[xyz]=((x*y^{-1})*z)$ that was introduced in \cite{Nie14}. Note that since $k$ is in $Z(L)$, it can be moved throughout any word containing it. The operation $[\, ]_1$ is used an even number of times on the right and on the left hand sides of the axioms LN and RN, and therefore $k$ also appears an even number of times. Thus, since $k$ is of order at most two, it can be eradicated from these expressions, and the proof that $[\, ]_1$ satisfies LN and RN reduces to the proof contained in \cite[Lemma 5.7]{Nie14}. 

Note that because of our coloring conventions, the operation $[\, ]_1$ has to satisfy the equalities
\[ 
[[abc]_1cb]_1=a,\ [c[abc]_1a]_1=b,\ \textrm{and}\ [ba[abc]_1]_1=c,
\]
for all $a$, $b$, $c\in L$ (they are true for knot-theoretic flocks). Since $k$ appears twice on the left hand sides of these equations, it can be disregarded, and the equalities follow from the inverse properties of extra loops, as in \cite{Nie14}. The proofs for the operation $[xyz]_2$ are analogous.
\end{proof}

\section{Cocycle invariants and group actions on the set of colorings}

The (co)homology theory for algebras satisfying the nesting conditions LN and RN was developed in \cite{Nie17}. The operation (\ref{knfl}) defining knot-theoretic flocks considerably simplifies the terms appearing in \cite{Nie17}, and allows for an independent treatment. 

Let $(X,[\,])=\mathcal{F}((X,\cdot),k)$ be a knot theoretic flock. We will suppress the symbol of the group operation in the expressions below. The chain groups $C_n(X)=Z\langle X^{n+2}\rangle$ are defined as the free abelian groups generated by $(n+2)$-tuples $(x_0,x_1,\ldots, x_n,x_{n+1})$ of elements of $X$, for $n\geq -1$, with $C_{-2}(X)=\mathbb{Z}$. The differential $\partial_n\colon C_n(X) \to C_{n-1}(X)$ takes the form:
\begin{align*}
&\partial_n(x_0,x_1,\ldots,x_n,x_{n+1})=(x_1,\ldots,x_n,x_{n+1})\\
&+\sum_{i=1}^n(-1)^i\{(x_0x_i^{-1}x_{i+1}k^i,x_1x_i^{-1}x_{i+1}k^{i-1},\ldots,
x_{i-1}x_i^{-1}x_{i+1}k,\hat{x}_i,x_{i+1},\ldots,x_n,x_{n+1})\\
&+(x_0,x_1,\ldots,x_{i-1},\hat{x}_i,x_{i-1}x_i^{-1}x_{i+1}k,x_{i-1}x_i^{-1}x_{i+2}k^2,\ldots,x_{i-1}x_i^{-1}x_{n+1}k^{n+1-i})\}\\
&+(-1)^{n+1}(x_0,x_1,\ldots,x_n),
\end{align*}
where $\hat{x}_i$ denotes a missing element. We also set $\partial_{-1}(x_0)=0$ and
$\partial_0(x_0,x_1)=x_1-x_0$.
There is a degenerate subcomplex $\{C^D,\partial\}$ in which 
$C_n^D(X)$ is the free abelian group generated by $(n+2)$-tuples 
$x=(x_0,x_1,\ldots, x_n,x_{n+1})$ of elements of $X$ containing a triple
$a$, $b$, $ba^{-1}bk$ on three consecutive coordinates, for some $a$ and $b\in X$.
For $n<1$, we take $C_n^D(X)=0$. The normalized homology yields knot and knotted surface invariants via cycles assigned to colorings of oriented diagrams, see \cite{Nie17} and some details below. The knot-theoretic flock cohomology is defined in a standard dual way, with the coboundary $\delta$ obtained via $(\delta f)(c)=f(\partial c)$.
Thus, the 1-cocycles (used for link diagrams) are functions 
$f\colon X\times X\times X\to A$, where $A$ is an abelian group, satisfying
two conditions for all $a$, $b$, $c$, $d\in X$:
\begin{align*}
(1)\quad &f(a,b,ba^{-1}bk)=0,\\
(2)\quad &f(b,c,d)-f(a,ab^{-1}ck,ab^{-1}d)-f(ab^{-1}ck,c,d)\\
		&+f(a,b,bc^{-1}dk)+f(ac^{-1}d,bc^{-1}dk,d)-f(a,b,c)=0.
\end{align*}
2-cocycles (used for knotted surface diagrams in $\mathbb{R}^3$) are 
functions 
$\phi\colon X\times X\times X\times X\to A$, satisfying for all $a$, $b$, $c$, $d$, $e\in X$: 
\begin{align*}
(1)\quad &\phi(a,b,ba^{-1}bk,c)=\phi(c,a,b,ba^{-1}bk)=0,\\
(2)\quad &\phi(b,c,d,e)-\phi(a,ab^{-1}ck,ab^{-1}d,ab^{-1}ek)
-\phi(ab^{-1}ck,c,d,e)+\phi(a,b,bc^{-1}dk,bc^{-1}e)\\
&+\phi(ac^{-1}d,bc^{-1}dk,d,e)-\phi(a,b,c,cd^{-1}ek)
-\phi(ad^{-1}ek,bd^{-1}e,cd^{-1}ek,e)+\phi(a,b,c,d)=0.
\end{align*}
The cocycle invariants using knot-theoretic flocks are defined in analogy to the construction in \cite{CJKLS03}. 
A cocycle invariant can be viewed as a multiset consisting of evaluations of a given cocycle on the cycles assigned to all colorings of a given diagram. Here we give a more detailed explanation for links, directing the reader to \cite{Nie17} for a description of cocycle invariants for knotted surfaces. 

\begin{figure}
\begin{center}
\includegraphics[height=3.5 cm]{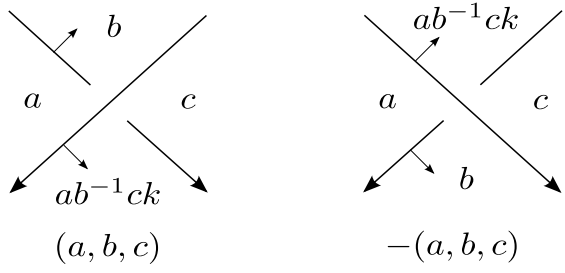}
\caption{The chains assigned to flock-colored crossings.}\label{flockcycles}
\end{center}
\end{figure}

Let $D$ be an oriented link diagram, and let $\mathcal{C}\colon Reg(D)\to X$ be its coloring with a knot-theoretic flock $(X,[\,])=\mathcal{F}((X,\cdot),k)$, where $Reg(D)$ denotes the set of regions in the complement of $D$. We denote the $\pm 1$ sign of the crossing $cr$ by $\epsilon(cr)$, its source region (i.e., the region with all the co-orientation arrows of $cr$ pointing out of it) by $r_s$, and the target region (all co-orientation arrows point into it) by $r_t$. The region of $cr$ separated from $r_s$ by an under-arc will be denoted by $r_m$. Then 
\[
c(\mathcal{C})=\sum_{cr\in D} \epsilon(cr)(\mathcal{C}(r_s),\mathcal{C}(r_m),\mathcal{C}(r_t))
\]
is a cycle in the first homology (with $\mathbb{Z}$ coefficients) of $(X,[\,])$. See Fig. \ref{flockcycles}, which shows the chains assigned to crossings. Now let $\{\mathcal{C}_1,\ldots,\mathcal{C}_n\}$ be the set of all colorings of $D$ with $(X,[\,])$, and let $\phi$ be a 1-cocycle from the cohomology of $(X,[\,])$, with values in the abelian group $A$. Then the cocycle invariant $\Psi(D,\phi)$ is defined as the multiset
\[
\{\phi(c(\mathcal{C}_1)),\ldots, \phi(c(\mathcal{C}_n))\}.
\]
It is an invariant of Reidemeister moves, so we can write $\Psi(L,\phi)$ instead of $\Psi(D,\phi)$, where $D$ is a diagram for a link $L$.

\begin{example}
Let $X$ be a permutation group with 12 elements numbered as follows:
1. (),\\
2. (1,2,3,5)(4,10,7,12)(6,11,9,8), 3. (1,3)(2,5)(4,7)(6,9)(8,11)(10,12), 
4. (1,4,8)(2,6,10)(3,7,11)(5,9,12),\\ 
5. (1,5,3,2)(4,12,7,10)(6,8,9,11), 
6. (1,6,3,9)(2,7,5,4)(8,10,11,12), 7. (1,7,8,3,4,11)(2,9,10,5,6,12),\\ 
8. (1,8,4)(2,10,6)(3,11,7)(5,12,9), 9. (1,9,3,6)(2,4,5,7)(8,12,11,10), 
10. (1,10,3,12)(2,11,5,8)(4,6,7,9),\\ 11. (1,11,4,3,8,7)(2,12,6,5,10,9), 
12. (1,12,3,10)(2,8,5,11)(4,9,7,6).\\
Note that $k=(1,3)(2,5)(4,7)(6,9)(8,11)(10,12)$ is of order two, and belongs to the center of $X$. From now on, we will write the numbers assigned to these elements instead of the elements themselves. The twelve tables below show an example of a 1-cocycle $\Phi$ for the knot theoretic flock $(X,[\,])=\mathcal{F}((X,\cdot),k)$ with values in $\mathbb{Z}_3$, generated with the help of GAP. The tables include the values of $\Phi$
on the triples $(i,j,l)$ with $i$, $j$, $l\in X$. The value of   
$\Phi(i,j,l)$ can be found in the $i$-th table, in the intersection of the $j$-th row and the $l$-th column. For example, $\Phi(1,2,3)=2$ and $\Phi(10,3,6)=1$. The cocycle invariant $\Psi(L,\Phi)$ is quite effective in distinguishing links. It has 27 distinct values on the 48-element set of nontrivial links with two components, that have up to eight crossings in the minimal braid form in the table from \cite{Git04}. It distinguishes all of them from the trivial link with two components, for which the value of the cocycle invariant is $12^3=1728$. The number of colorings can distinguish only 5 classes among these links. The values of the cocycle invariant in Table \ref{cocvalues} are given as polynomials. For example $768 + 408t + 552t^2$ calculated for the closure of the braid $\sigma_1\sigma_2\sigma_1^{-1}\sigma_2\sigma_1\sigma_3\sigma_2^{-1}\sigma_3$
means that out of 1728 cycles assigned to the colorings of this link, $\Phi$ has value 0 on 768 of them, value 1 on 408 cycles, and value 2 on 552 cycles. That is, the values of the cocycle are encoded in the powers of $t$.

\begin{tiny}
\noindent
\[ 
i=1\hspace{0.5cm}
\begin{array}{|cccccccccccc|} 
\hline
0 & 0 & 0 & 0 & 1 & 1 & 1 & 0 & 1 & 1 & 2 & 1 \\ 
0 & 2 & 2 & 0 & 0 & 2 & 0 & 1 & 0 & 2 & 0 & 1 \\ 
2 & 1 & 0 & 1 & 2 & 0 & 1 & 1 & 2 & 2 & 2 & 2 \\ 
0 & 1 & 2 & 2 & 2 & 0 & 2 & 2 & 0 & 1 & 0 & 1 \\ 
0 & 0 & 2 & 2 & 2 & 0 & 2 & 1 & 0 & 0 & 0 & 1 \\ 
0 & 2 & 1 & 1 & 2 & 0 & 1 & 2 & 0 & 1 & 1 & 2 \\ 
0 & 1 & 2 & 0 & 0 & 2 & 2 & 0 & 1 & 1 & 0 & 0 \\ 
1 & 2 & 0 & 0 & 0 & 1 & 0 & 2 & 0 & 1 & 1 & 0 \\ 
0 & 2 & 0 & 0 & 0 & 2 & 0 & 0 & 2 & 0 & 1 & 0 \\ 
0 & 2 & 1 & 2 & 0 & 1 & 1 & 0 & 1 & 1 & 0 & 0 \\ 
1 & 0 & 0 & 2 & 0 & 0 & 0 & 0 & 0 & 1 & 1 & 0 \\ 
0 & 0 & 0 & 1 & 1 & 1 & 2 & 2 & 0 & 2 & 1 & 2 \\ 
\hline
\end{array}
\hspace{0.7cm} i=2\hspace{0.5cm}
\begin{array}{|cccccccccccc|} 
\hline
1 & 0 & 2 & 0 & 0 & 0 & 2 & 1 & 0 & 2 & 0 & 2 \\ 
2 & 2 & 1 & 2 & 0 & 0 & 0 & 0 & 2 & 0 & 0 & 2 \\ 
2 & 0 & 1 & 0 & 1 & 0 & 1 & 0 & 1 & 1 & 0 & 2 \\ 
2 & 0 & 1 & 0 & 2 & 0 & 0 & 2 & 0 & 2 & 1 & 0 \\ 
0 & 0 & 2 & 1 & 0 & 0 & 0 & 2 & 0 & 2 & 0 & 2 \\
2 & 2 & 1 & 1 & 0 & 0 & 0 & 2 & 1 & 2 & 1 & 0 \\ 
2 & 0 & 0 & 2 & 2 & 0 & 0 & 1 & 0 & 0 & 1 & 0 \\ 
0 & 0 & 0 & 1 & 2 & 0 & 0 & 2 & 1 & 2 & 0 & 2 \\ 
2 & 1 & 2 & 2 & 1 & 0 & 2 & 2 & 0 & 0 & 1 & 0 \\ 
1 & 0 & 0 & 0 & 2 & 2 & 0 & 1 & 0 & 1 & 0 & 1 \\ 
2 & 0 & 1 & 0 & 2 & 0 & 2 & 1 & 1 & 2 & 0 & 0 \\ 
0 & 2 & 2 & 1 & 1 & 0 & 0 & 2 & 0 & 0 & 0 & 1 \\
\hline
\end{array}
\]
\[ 
i=3\hspace{0.5cm}
\begin{array}{|cccccccccccc|} 
\hline
0 & 0 & 1 & 2 & 0 & 0 & 0 & 0 & 0 & 1 & 1 & 2 \\ 
2 & 1 & 0 & 1 & 2 & 0 & 0 & 2 & 0 & 0 & 0 & 0 \\ 
0 & 1 & 1 & 2 & 1 & 0 & 0 & 0 & 0 & 0 & 0 & 0 \\ 
1 & 2 & 2 & 1 & 2 & 2 & 0 & 0 & 2 & 1 & 1 & 0 \\ 
2 & 0 & 0 & 1 & 0 & 0 & 0 & 2 & 2 & 2 & 0 & 0 \\ 
0 & 1 & 0 & 0 & 1 & 1 & 2 & 0 & 0 & 0 & 1 & 0 \\ 
2 & 0 & 0 & 1 & 1 & 2 & 0 & 0 & 2 & 2 & 1 & 0 \\ 
1 & 0 & 1 & 0 & 0 & 0 & 2 & 0 & 1 & 0 & 0 & 0 \\ 
1 & 1 & 0 & 1 & 0 & 1 & 0 & 1 & 0 & 0 & 1 & 2 \\ 
0 & 1 & 0 & 1 & 2 & 2 & 1 & 2 & 1 & 0 & 2 & 0 \\ 
0 & 2 & 2 & 0 & 1 & 0 & 0 & 0 & 2 & 0 & 2 & 2 \\ 
1 & 1 & 0 & 2 & 0 & 2 & 0 & 2 & 0 & 1 & 2 & 1 \\
\hline
\end{array}
\hspace{0.7cm} i=4\hspace{0.5cm}
\begin{array}{|cccccccccccc|} 
\hline
1 & 2 & 0 & 0 & 1 & 0 & 0 & 2 & 0 & 2 & 0 & 1 \\ 
1 & 2 & 2 & 0 & 2 & 0 & 1 & 2 & 0 & 0 & 1 & 2 \\ 
0 & 0 & 2 & 1 & 1 & 2 & 0 & 0 & 0 & 2 & 0 & 1 \\ 
0 & 0 & 0 & 0 & 2 & 0 & 0 & 0 & 0 & 0 & 0 & 0 \\ 
0 & 0 & 2 & 0 & 0 & 0 & 0 & 0 & 0 & 0 & 0 & 1 \\ 
0 & 1 & 2 & 0 & 0 & 2 & 2 & 1 & 1 & 2 & 1 & 0 \\ 
2 & 1 & 1 & 2 & 1 & 1 & 0 & 1 & 0 & 1 & 0 & 1 \\ 
2 & 2 & 0 & 0 & 0 & 0 & 2 & 2 & 1 & 2 & 0 & 1 \\ 
1 & 1 & 2 & 0 & 1 & 0 & 2 & 2 & 1 & 1 & 2 & 1 \\ 
2 & 0 & 1 & 0 & 0 & 1 & 0 & 0 & 0 & 2 & 0 & 1 \\ 
0 & 1 & 0 & 1 & 1 & 2 & 2 & 0 & 1 & 1 & 0 & 1 \\ 
2 & 0 & 0 & 0 & 1 & 2 & 1 & 1 & 0 & 0 & 2 & 0 \\ 
\hline
\end{array}
\]
\[ 
i=5\hspace{0.5cm}
\begin{array}{|cccccccccccc|} 
\hline
0 & 0 & 0 & 0 & 0 & 0 & 0 & 0 & 0 & 0 & 1 & 0 \\ 
0 & 0 & 0 & 1 & 1 & 1 & 0 & 1 & 0 & 2 & 2 & 0 \\ 
2 & 1 & 1 & 2 & 0 & 2 & 1 & 0 & 1 & 1 & 0 & 0 \\ 
2 & 2 & 0 & 2 & 0 & 1 & 2 & 1 & 0 & 2 & 0 & 2 \\ 
1 & 0 & 1 & 2 & 1 & 0 & 2 & 0 & 1 & 1 & 2 & 0 \\ 
2 & 2 & 0 & 2 & 0 & 2 & 2 & 2 & 2 & 0 & 0 & 2 \\ 
0 & 2 & 1 & 1 & 0 & 2 & 1 & 0 & 1 & 0 & 2 & 2 \\ 
1 & 0 & 2 & 2 & 0 & 1 & 0 & 2 & 0 & 1 & 2 & 1 \\ 
2 & 2 & 2 & 0 & 0 & 0 & 1 & 1 & 0 & 0 & 0 & 0 \\ 
0 & 0 & 0 & 1 & 1 & 0 & 1 & 2 & 2 & 1 & 2 & 0 \\ 
0 & 1 & 0 & 1 & 0 & 0 & 0 & 0 & 1 & 2 & 2 & 1 \\ 
0 & 0 & 0 & 2 & 1 & 0 & 0 & 1 & 0 & 2 & 0 & 0 \\
\hline
\end{array}
\hspace{0.7cm} i=6\hspace{0.5cm}
\begin{array}{|cccccccccccc|} 
\hline
1 & 2 & 1 & 0 & 1 & 0 & 0 & 2 & 1 & 2 & 2 & 0 \\ 
0 & 0 & 0 & 1 & 2 & 0 & 1 & 0 & 2 & 1 & 0 & 0 \\ 
2 & 2 & 0 & 0 & 0 & 0 & 2 & 0 & 0 & 1 & 2 & 1 \\ 
1 & 0 & 0 & 2 & 0 & 0 & 2 & 2 & 1 & 2 & 0 & 1 \\ 
0 & 2 & 0 & 2 & 0 & 0 & 2 & 0 & 0 & 1 & 0 & 0 \\ 
2 & 1 & 2 & 2 & 1 & 1 & 1 & 1 & 0 & 2 & 0 & 1 \\ 
0 & 2 & 2 & 2 & 0 & 0 & 2 & 2 & 0 & 0 & 2 & 2 \\ 
1 & 1 & 0 & 2 & 1 & 0 & 1 & 1 & 0 & 2 & 0 & 1 \\ 
0 & 0 & 0 & 2 & 0 & 1 & 1 & 0 & 0 & 1 & 1 & 2 \\ 
0 & 1 & 2 & 0 & 0 & 1 & 0 & 2 & 0 & 2 & 0 & 2 \\ 
1 & 2 & 2 & 0 & 1 & 0 & 1 & 0 & 1 & 2 & 2 & 1 \\ 
2 & 1 & 1 & 0 & 0 & 1 & 0 & 0 & 2 & 1 & 1 & 1 \\
\hline
\end{array}
\]
\[ 
i=7\hspace{0.5cm}
\begin{array}{|cccccccccccc|} 
\hline
0 & 2 & 2 & 0 & 0 & 2 & 0 & 0 & 1 & 1 & 2 & 1 \\ 
2 & 0 & 2 & 1 & 0 & 0 & 0 & 0 & 0 & 0 & 2 & 2 \\ 
2 & 2 & 0 & 1 & 2 & 0 & 1 & 0 & 0 & 2 & 0 & 1 \\ 
0 & 1 & 0 & 0 & 2 & 0 & 0 & 1 & 0 & 0 & 1 & 1 \\ 
2 & 0 & 0 & 0 & 1 & 1 & 0 & 2 & 2 & 2 & 0 & 1 \\ 
0 & 0 & 1 & 2 & 1 & 1 & 0 & 2 & 2 & 1 & 1 & 1 \\ 
1 & 1 & 1 & 0 & 0 & 1 & 2 & 1 & 1 & 1 & 1 & 1 \\ 
0 & 0 & 1 & 2 & 0 & 1 & 0 & 2 & 2 & 2 & 2 & 0 \\ 
0 & 2 & 0 & 2 & 1 & 0 & 0 & 2 & 0 & 1 & 1 & 0 \\ 
2 & 2 & 0 & 1 & 0 & 1 & 0 & 1 & 1 & 1 & 0 & 2 \\ 
0 & 0 & 1 & 1 & 2 & 1 & 1 & 2 & 1 & 2 & 2 & 1 \\ 
0 & 0 & 0 & 0 & 1 & 2 & 0 & 2 & 0 & 0 & 2 & 2 \\ 
\hline
\end{array}
\hspace{0.7cm} i=8\hspace{0.5cm}
\begin{array}{|cccccccccccc|} 
\hline
2 & 0 & 2 & 0 & 0 & 0 & 0 & 1 & 0 & 1 & 2 & 0 \\ 
2 & 0 & 0 & 0 & 1 & 0 & 1 & 0 & 0 & 0 & 2 & 0 \\ 
2 & 1 & 0 & 2 & 1 & 1 & 0 & 1 & 0 & 1 & 2 & 1 \\ 
2 & 1 & 0 & 0 & 2 & 1 & 0 & 0 & 1 & 0 & 0 & 0 \\ 
0 & 2 & 1 & 0 & 0 & 2 & 1 & 0 & 0 & 1 & 2 & 1 \\
2 & 2 & 2 & 0 & 1 & 2 & 0 & 0 & 2 & 2 & 2 & 1 \\ 
0 & 1 & 0 & 2 & 0 & 0 & 2 & 0 & 1 & 2 & 0 & 0 \\
1 & 1 & 1 & 1 & 0 & 0 & 0 & 0 & 2 & 1 & 0 & 1 \\
0 & 2 & 2 & 0 & 2 & 0 & 2 & 0 & 2 & 0 & 2 & 2 \\
2 & 1 & 2 & 0 & 1 & 1 & 1 & 0 & 0 & 2 & 0 & 0 \\
1 & 0 & 0 & 0 & 1 & 1 & 1 & 2 & 2 & 1 & 0 & 1 \\ 
0 & 2 & 2 & 0 & 1 & 2 & 1 & 0 & 0 & 2 & 1 & 0 \\
\hline
\end{array}
\]
\[ 
i=9\hspace{0.55cm}
\begin{array}{|cccccccccccc|} 
\hline
1 & 1 & 0 & 0 & 0 & 2 & 1 & 2 & 0 & 2 & 2 & 1 \\ 
1 & 2 & 0 & 0 & 0 & 1 & 1 & 1 & 0 & 0 & 1 & 0 \\ 
2 & 0 & 0 & 1 & 2 & 2 & 1 & 2 & 0 & 2 & 0 & 2 \\ 
0 & 1 & 2 & 1 & 0 & 1 & 0 & 0 & 0 & 2 & 2 & 0 \\ 
0 & 0 & 0 & 2 & 1 & 2 & 0 & 1 & 0 & 0 & 0 & 2 \\ 
2 & 1 & 1 & 2 & 2 & 0 & 1 & 2 & 0 & 2 & 0 & 2 \\ 
0 & 2 & 1 & 0 & 2 & 0 & 0 & 2 & 0 & 0 & 0 & 1 \\ 
2 & 0 & 2 & 0 & 0 & 2 & 0 & 1 & 0 & 2 & 1 & 2 \\ 
1 & 0 & 1 & 0 & 0 & 0 & 1 & 2 & 2 & 0 & 0 & 1 \\ 
2 & 0 & 2 & 2 & 1 & 1 & 1 & 2 & 1 & 1 & 0 & 2 \\ 
2 & 2 & 1 & 2 & 2 & 2 & 1 & 1 & 0 & 1 & 1 & 0 \\ 
0 & 0 & 0 & 0 & 1 & 0 & 0 & 0 & 1 & 0 & 0 & 1 \\
\hline
\end{array}
\hspace{0.7cm} i=10\hspace{0.35cm}
\begin{array}{|cccccccccccc|} 
\hline
1 & 1 & 0 & 2 & 2 & 2 & 0 & 1 & 1 & 0 & 0 & 1 \\ 
0 & 2 & 2 & 1 & 1 & 2 & 2 & 2 & 0 & 1 & 0 & 1 \\ 
0 & 0 & 0 & 2 & 0 & 1 & 2 & 0 & 2 & 0 & 1 & 0 \\ 
1 & 1 & 0 & 1 & 2 & 1 & 2 & 1 & 1 & 0 & 1 & 1 \\
0 & 0 & 1 & 0 & 1 & 0 & 0 & 2 & 0 & 0 & 0 & 0 \\ 
1 & 0 & 0 & 0 & 0 & 2 & 0 & 0 & 0 & 1 & 1 & 1 \\ 
2 & 0 & 2 & 2 & 2 & 0 & 0 & 2 & 1 & 0 & 0 & 0 \\ 
0 & 1 & 0 & 0 & 0 & 0 & 0 & 2 & 2 & 0 & 0 & 2 \\ 
0 & 2 & 0 & 2 & 0 & 0 & 0 & 0 & 1 & 0 & 1 & 2 \\ 
1 & 0 & 0 & 2 & 2 & 0 & 1 & 1 & 0 & 1 & 0 & 0 \\ 
1 & 0 & 2 & 1 & 0 & 2 & 0 & 0 & 2 & 0 & 2 & 2 \\ 
0 & 1 & 0 & 0 & 2 & 1 & 0 & 2 & 2 & 1 & 1 & 0 \\ 
\hline
\end{array}
\]
\[ 
i=11\hspace{0.45cm}
\begin{array}{|cccccccccccc|} 
\hline
0 & 2 & 0 & 0 & 0 & 1 & 2 & 0 & 1 & 2 & 2 & 2 \\ 
2 & 0 & 0 & 1 & 0 & 1 & 1 & 1 & 2 & 0 & 0 & 0 \\ 
0 & 0 & 0 & 0 & 0 & 0 & 0 & 0 & 0 & 2 & 1 & 2 \\ 
0 & 1 & 0 & 0 & 1 & 1 & 0 & 2 & 2 & 0 & 0 & 0 \\ 
0 & 2 & 2 & 1 & 2 & 1 & 1 & 0 & 2 & 1 & 0 & 2 \\ 
2 & 0 & 1 & 0 & 0 & 2 & 2 & 0 & 0 & 2 & 0 & 1 \\
0 & 1 & 2 & 1 & 2 & 2 & 0 & 2 & 0 & 0 & 2 & 2 \\
0 & 1 & 0 & 0 & 1 & 2 & 0 & 0 & 0 & 2 & 1 & 0 \\ 
0 & 2 & 1 & 2 & 2 & 1 & 0 & 1 & 0 & 1 & 0 & 1 \\ 
2 & 1 & 2 & 2 & 0 & 1 & 0 & 1 & 0 & 2 & 0 & 0 \\
0 & 2 & 0 & 2 & 0 & 0 & 0 & 0 & 0 & 0 & 1 & 0 \\
0 & 1 & 0 & 0 & 2 & 1 & 2 & 0 & 0 & 1 & 0 & 0 \\
\hline
\end{array}
\hspace{0.7cm} i=12\hspace{0.35cm}
\begin{array}{|cccccccccccc|} 
\hline
1 & 1 & 0 & 0 & 0 & 1 & 2 & 1 & 2 & 2 & 1 & 0 \\ 
2 & 0 & 1 & 2 & 2 & 0 & 1 & 1 & 2 & 2 & 0 & 1 \\ 
1 & 1 & 0 & 0 & 1 & 0 & 1 & 0 & 0 & 2 & 2 & 0 \\ 
0 & 2 & 1 & 2 & 1 & 1 & 0 & 0 & 0 & 0 & 0 & 0 \\ 
0 & 0 & 0 & 1 & 0 & 0 & 2 & 2 & 0 & 1 & 1 & 0 \\ 
0 & 0 & 0 & 0 & 0 & 1 & 2 & 0 & 0 & 0 & 0 & 1 \\ 
0 & 0 & 0 & 2 & 2 & 1 & 2 & 2 & 1 & 1 & 2 & 0 \\ 
0 & 2 & 0 & 0 & 2 & 0 & 2 & 0 & 1 & 1 & 1 & 0 \\ 
2 & 0 & 0 & 0 & 2 & 1 & 1 & 1 & 0 & 1 & 2 & 0 \\ 
2 & 0 & 1 & 1 & 0 & 2 & 2 & 0 & 0 & 0 & 0 & 2 \\ 
0 & 0 & 0 & 1 & 0 & 0 & 0 & 0 & 2 & 0 & 1 & 0 \\ 
2 & 0 & 1 & 1 & 1 & 2 & 1 & 0 & 0 & 0 & 0 & 0 \\
\hline
\end{array}
\]

\end{tiny}

\begin{table}
\begin{tabular}{|r|l|}
\hline
$480 + 264t + 120t^2$ & $\sigma_1\sigma_1,\ \sigma_1\sigma_1\sigma_1\sigma_1\sigma_2^{-1}\sigma_1\sigma_2^{-1},\ \sigma_1\sigma_1\sigma_2^{-1}\sigma_1\sigma_2^{-1}\sigma_3\sigma_2^{
-1}\sigma_3$\\
$480 + 408t + 552t^2$ & $\sigma_1\sigma_1\sigma_1\sigma_1,\ \sigma_1\sigma_2^{-1}\sigma_1\sigma_2^{-1}\sigma_3^{-1}\sigma_2\sigma_4\sigma_3^{-1}\sigma_4$\\
$768 + 120t + 552t^2$ & $\sigma_1\sigma_1\sigma_2\sigma_1^{-1}\sigma_2,\ \sigma_1\sigma_1\sigma_2^{-1}\sigma_1\sigma_1\sigma_2^{-1}\sigma_2^{-1},\sigma_1\sigma_1\sigma_1\sigma_1\sigma_
2\sigma_2\sigma_1^{-1}\sigma_2\sigma_2$,\\ & $\sigma_1\sigma_1\sigma_1\sigma_2\sigma_1^{-1}\sigma_2\sigma_3\sigma_2^{-1}\sigma_3\sigma_3$\\
$864 + 576t$ & $\sigma_1\sigma_1\sigma_2^{-1}\sigma_1\sigma_2^{-1}$\\
$1152$ & $\sigma_1\sigma_1\sigma_1\sigma_1\sigma_1\sigma_1,\ \sigma_1\sigma_1\sigma_2\sigma_1^{-1}\sigma_2\sigma_3\sigma_2^{-1}\sigma_3$\\
$912 + 408t + 408t^2$ & $\sigma_1\sigma_2^{-1}\sigma_1\sigma_3\sigma_2^{-1}\sigma_3,\ \sigma_1\sigma_1\sigma_1\sigma_2\sigma_1^{-1}\sigma_2\sigma_2$\\
$864$ & $\sigma_1\sigma_1\sigma_1\sigma_1\sigma_2\sigma_1^{-1}\sigma_2,\ \sigma_1\sigma_1\sigma_1\sigma_2^{-1}\sigma_1\sigma_3\sigma_2^{-1}\sigma_3,\ \sigma_1\sigma_1\sigma_1\sigma_
2\sigma_1^{-1}\sigma_2\sigma_2\sigma_3\sigma_2^{-1}\sigma_3$,\\ & $\sigma_1\sigma_1\sigma_1\sigma_1\sigma_1\sigma_2\sigma_1^{-1}\sigma_2\sigma_2,\ \sigma_1\sigma_1\sigma_2\sigma_1^{-1}\sigma_
3^{-1}\sigma_2\sigma_4\sigma_3^{-1}\sigma_4$\\
$480 + 120t + 264t^2$ & $\sigma_1\sigma_1\sigma_2^{-1}\sigma_1\sigma_2^{-1}\sigma_3^{-1}\sigma_2\sigma_3^{-1}$\\
$1152 + 288t$ & $\sigma_1\sigma_1\sigma_1\sigma_2^{-1}\sigma_1\sigma_1\sigma_2^{-1}$\\
$624 + 408t + 120t^2$ & $\sigma_1\sigma_1\sigma_1\sigma_2^{-1}\sigma_1\sigma_2^{-1}\sigma_2^{-1},\ \sigma_1\sigma_1\sigma_2\sigma_1^{-1}\sigma_2\sigma_3^{-1}\sigma_2\sigma_3^{
-1},\ \sigma_1\sigma_1\sigma_2^{-1}\sigma_1\sigma_3\sigma_2^{-1}\sigma_3\sigma_3$,\\ & $\sigma_1\sigma_2^{-1}\sigma_1\sigma_2^{-1}\sigma_2^{-1}\sigma_3\sigma_2^{-1}\sigma_3$\\
$624 + 264t + 552t^2$ & $\sigma_1\sigma_1\sigma_2^{-1}\sigma_1\sigma_3\sigma_2\sigma_2\sigma_3,\ \sigma_1\sigma_2^{-1}\sigma_1\sigma_3\sigma_2^{-1}\sigma_2^{-1}\sigma_2^{-1}\sigma_3$\\
$1440 + 288t$ & $\sigma_1\sigma_1\sigma_2^{-1}\sigma_1\sigma_2^{-1}\sigma_1\sigma_2^{-1}$\\
$864 + 576t$ & $\sigma_1\sigma_1\sigma_1\sigma_2\sigma_1^{-1}\sigma_3^{-1}\sigma_2\sigma_3^{-1}$\\
$768 + 408t + 264t^2$ & $\sigma_1\sigma_1\sigma_1\sigma_2^{-1}\sigma_1^{-1}\sigma_1^{-1}\sigma_2^{-1}$\\
$768 + 264t + 408t^2$ & $\sigma_1\sigma_1\sigma_1\sigma_2\sigma_1\sigma_1\sigma_2,\ \sigma_1\sigma_1\sigma_2^{-1}\sigma_1\sigma_1\sigma_3\sigma_2^{-1}\sigma_3,\ \sigma_1\sigma_1\sigma_
1\sigma_1\sigma_1\sigma_1\sigma_2\sigma_1^{-1}\sigma_2,$\\ & $\sigma_1\sigma_1\sigma_1\sigma_1\sigma_2\sigma_1^{-1}\sigma_2\sigma_3\sigma_2^{-1}\sigma_3$\\
$1152 + 288t$ & $\sigma_1\sigma_1\sigma_1\sigma_2\sigma_1^{-1}\sigma_1^{-1}\sigma_2,\ \sigma_1\sigma_2^{-1}\sigma_1\sigma_2^{-1}\sigma_3\sigma_2^{-1}\sigma_2^{-1}\sigma_3$\\
$480 + 552t + 408t^2$ & $\sigma_1\sigma_1\sigma_1\sigma_1\sigma_1\sigma_1\sigma_1\sigma_1$\\
$768 + 552t + 120t^2$ & $\sigma_1\sigma_1\sigma_2\sigma_1^{-1}\sigma_2\sigma_3\sigma_2^{-1}\sigma_3\sigma_4\sigma_3^{-1}\sigma_4$\\
$624 + 408t + 408t^2$ & $\sigma_1\sigma_2^{-1}\sigma_1\sigma_3\sigma_2\sigma_2\sigma_4^{-1}\sigma_3\sigma_4^{-1}$\\
$1440$ & $\sigma_1\sigma_1\sigma_2^{-1}\sigma_1\sigma_3\sigma_2^{-1}\sigma_2^{-1}\sigma_3$\\
$864 + 288t + 288t^2$ & $\sigma_1\sigma_2^{-1}\sigma_1\sigma_2^{-1}\sigma_1\sigma_3\sigma_2^{-1}\sigma_3,\ \sigma_1\sigma_1\sigma_2^{-1}\sigma_1^{-1}\sigma_1^{-1}\sigma_3\sigma_2^{
-1}\sigma_3$\\
$1632 + 1128t + 696t^2$ & $\sigma_1\sigma_2^{-1}\sigma_3\sigma_2^{-1}\sigma_1\sigma_2^{-1}\sigma_3\sigma_2^{-1}$\\
$1776 + 552t + 1128t^2$ & $\sigma_1\sigma_1\sigma_2\sigma_3^{-1}\sigma_2\sigma_1^{-1}\sigma_2\sigma_3\sigma_3\sigma_2$\\
$624 + 696t + 408t^2$ & $\sigma_1\sigma_1\sigma_1\sigma_1\sigma_2\sigma_1^{-1}\sigma_2\sigma_2\sigma_2$\\
$912 + 696t + 120t^2$ & $\sigma_1\sigma_1\sigma_2\sigma_1^{-1}\sigma_2\sigma_2\sigma_2\sigma_3\sigma_2^{-1}\sigma_3$\\
$912 + 264t + 552t^2$ & $\sigma_1\sigma_1\sigma_2\sigma_1\sigma_1\sigma_3^{-1}\sigma_2\sigma_3^{-1}$\\
$768 + 408t + 552t^2$ & $\sigma_1\sigma_2\sigma_1^{-1}\sigma_2\sigma_1\sigma_3\sigma_2^{-1}\sigma_3$\\
\hline
\end{tabular}
\caption{The values of the cocycle $\Phi$ on the 48 links with two components in the minimal braid form with up to 8 crossings.}
\label{cocvalues}
\end{table}
\end{example}

Let groups $G$ and $G'$ act on sets $A$ and $B$, respectively. Then the actions are said to be {\it equivalent} if there is an isomorphism $\beta\colon G\to G'$ and a bijective map $\alpha\colon A\to B$ such that, for all $g\in G$ and $a\in A$,
\begin{equation}\label{equivalentact}
\alpha(a{\mathchar"5E}g)=\alpha(a){\mathchar"5E}\beta(g),
\end{equation}
where ${\mathchar"5E}$ denotes the appropriate group action.
This defines an equivalence relation on group actions; see e.g. \cite{Rose} for more material on this topic. We will now consider group actions on the set of colorings, but for this notion to be useful, it has to take into account the Reidemeister (or Roseman, etc.) moves. We write the following definition on a more general level of knot-theoretic ternary quasigroups (see \cite{Nie17} for the corresponding colorings, generalizing the flock colorings).
\begin{de}\label{compatible}
Let $\mathfrak{C}(D,(X,[\, ]))$ denote the set of colorings of a knot diagram $D$ with a knot-theoretic ternary quasigroup $(X,[\, ])$. A Reidemeister move changing $D$ into a diagram $D'$ results in local changes of colorings, yielding a bijection 
$\alpha\colon\mathfrak{C}(D,(X,[\, ]))\to\mathfrak{C}(D',(X,[\, ]))$. Suppose that there is a group action 
$G\times\mathfrak{C}(D,(X,[\, ]))\to\mathfrak{C}(D,(X,[\, ]))$. We say that this action is {\it compatible with the Reidemeister moves}, if for any such move, and $D'$ and $\alpha$ as above, there is an isomorphism $\beta\colon G\to G'$, with $G'$ acting on 
$\mathfrak{C}(D',(X,[\, ]))$, so that the actions, $\alpha$, and $\beta$, satisfy the equation (\ref{equivalentact}). 
\end{de}

\begin{figure}
\begin{center}
\includegraphics[height=4.5 cm]{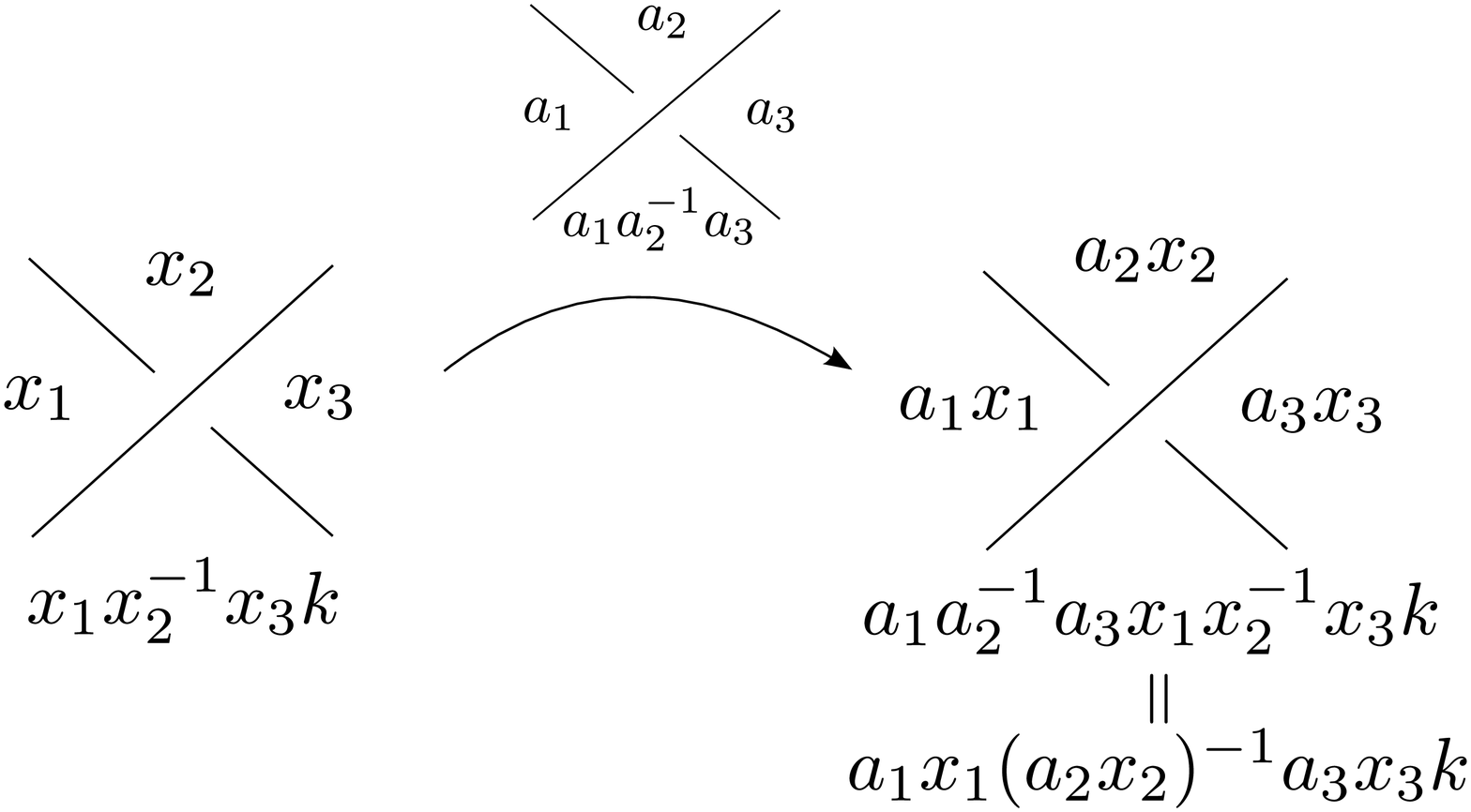}
\caption{The group of central colorings acting on colorings.}\label{action1}
\end{center}
\end{figure}

\begin{lemma}\label{directmult}
Let $\mathfrak{C}(D,\mathcal{F}((Z(X),\cdot),e))$ be the set of flock colorings of a diagram $D$ using elements of the center $Z(X)$ of the group $X$, with $e$ being the identity element of the group $X$. Then it forms a group acting on the set of colorings $\mathfrak{C}(D,\mathcal{F}((X,\cdot),k))$, for any central involution $k$ of $X$. The action is compatible with the Reidemeister moves.
\end{lemma}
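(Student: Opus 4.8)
The plan is to realize both the group and its action as \emph{pointwise multiplication}, and to reduce every assertion in the lemma to a single multiplicativity identity for the operation~(\ref{knfl}). Write $[xyz]=xy^{-1}zk$ for the operation of $\mathcal{F}((X,\cdot),k)$ and $\langle xyz\rangle=xy^{-1}z$ for the operation of $\mathcal{F}((Z(X),\cdot),e)$. The identity I would record first is that for all $a,b,c\in X$ and all $a',b',c'\in Z(X)$,
\[
[(aa')(bb')(cc')]=[abc]\cdot\langle a'b'c'\rangle ,
\]
which holds because $a'$, $b'$, $c'$ and $k$ are central and can be collected on the right. Everything below is an instance of this equation.

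For the group structure, I would identify $\mathfrak{C}(D,\mathcal{F}((Z(X),\cdot),e))$ with the set of tuples in the abelian group $\prod_{r\in Reg(D)}Z(X)$ that satisfy, at each crossing, the defining relation of~(\ref{knfl}) with $k=e$. Since $Z(X)$ is abelian, each such relation is the condition that a fixed word in the coordinates equals $e$, i.e.\ that the tuple lies in the kernel of a homomorphism $\prod_{r}Z(X)\to Z(X)$; the central colorings are therefore the intersection of these kernels, hence an abelian subgroup, with pointwise product $(\mathcal{Z}_1\mathcal{Z}_2)(r)=\mathcal{Z}_1(r)\mathcal{Z}_2(r)$, identity the constant coloring $e$, and pointwise inverses. (Equivalently, closure under the product is the displayed identity with $a,b,c$ taken central.)

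For the action I would set $(\mathcal{C}\cdot\mathcal{Z})(r)=\mathcal{C}(r)\,\mathcal{Z}(r)$ for $\mathcal{C}\in\mathfrak{C}(D,\mathcal{F}((X,\cdot),k))$ and a central coloring $\mathcal{Z}$. The displayed identity, now with $a,b,c\in X$ arbitrary and $a',b',c'\in Z(X)$, shows that $\mathcal{C}\cdot\mathcal{Z}$ again satisfies the coloring condition at every crossing, so it is a genuine $\mathcal{F}((X,\cdot),k)$-coloring; the axioms $\mathcal{C}\cdot e=\mathcal{C}$ and $(\mathcal{C}\cdot\mathcal{Z}_1)\cdot\mathcal{Z}_2=\mathcal{C}\cdot(\mathcal{Z}_1\mathcal{Z}_2)$ are immediate from associativity of multiplication in $X$, and since the acting group is abelian the side of the action is irrelevant.

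The last and most delicate point is compatibility with the Reidemeister moves in the sense of Definition~\ref{compatible}. For a move carrying $D$ to $D'$, the induced bijection $\alpha$ leaves the colors of all regions outside the move unchanged and recomputes the colors of the affected regions by applying $[\,]$ to neighboring colors; I would take $\beta\colon G\to G'$ to be exactly this same local recipe applied to central colorings via $\langle\,\rangle$. Because $\langle\,\rangle$ preserves centrality, $\beta$ sends central colorings of $D$ to central colorings of $D'$ and is a bijection, and by the displayed identity restricted to $Z(X)$ it is a group isomorphism. The equivariance~(\ref{equivalentact}), $\alpha(\mathcal{C}\cdot\mathcal{Z})=\alpha(\mathcal{C})\cdot\beta(\mathcal{Z})$, then holds region by region: on unaffected regions both sides are the pointwise product, while on each affected region the new color is $[\,]$ applied to products $\mathcal{C}(r)\mathcal{Z}(r)$, which the identity splits as $[\cdots]\cdot\langle\cdots\rangle=\alpha(\mathcal{C})(r)\cdot\beta(\mathcal{Z})(r)$. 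The main obstacle is organizational rather than computational: one must check that, for each of the three Reidemeister moves, the local color-assignment rule defining $\alpha$ really is an instance of $[\,]$ applied to adjacent region colors (so that central factors can be extracted) and that it restricts correctly to central colorings. Once this uniform local description is in hand, the equivariance is a one-line consequence of the multiplicativity identity, with no move-by-move computation required.
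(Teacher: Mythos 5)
Your proof is correct and follows essentially the same route as the paper's: region-wise multiplication gives both the abelian group structure on central colorings and the action on $\mathfrak{C}(D,\mathcal{F}((X,\cdot),k))$, and compatibility is obtained by letting $\beta$ be the induced map on central colorings (the paper phrases this as the restriction of $\alpha$). Your explicit identity $[(aa')(bb')(cc')]=[abc]\cdot\langle a'b'c'\rangle$ is precisely the centrality computation the paper delegates to Fig.~\ref{action1}.
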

\begin{proof}
The colorings from $\mathfrak{C}(D,\mathcal{F}((Z(X),\cdot),e))$ form a group with the operation of region-wise multiplication of colors. The identity element is the coloring in which all the regions are labeled by $e$. The inverse of a coloring $\mathcal{C}\in \mathfrak{C}(D,\mathcal{F}((Z(X),\cdot),e))$ is the coloring $\mathcal{C}^{-1}$ such that $\mathcal{C}^{-1}(r)=(\mathcal{C}(r))^{-1}$, for any region $r$ of $D$. We also note that taking a product of two colorings with elements from the center, but with $k\neq e$, gives a coloring with $k=e$. If $D$ and $D'$ differ by a Reidemeister move, then the corresponding groups of central colorings are isomorphic. 
There is an action of the group $\mathfrak{C}(D,\mathcal{F}((Z(X),\cdot),e))$ on the set of colorings $\mathfrak{C}(D,\mathcal{F}((X,\cdot),k))$ given by the region-wise multiplication of colors. Centrality of colors of the acting coloring ensures that the result is again a flock coloring, for any central involution $k\in X$ (see Fig. \ref{action1}). It is not difficult to see that this action is compatible with the Reidemeister moves: if $\alpha$ is as in Definition \ref{compatible}, then we can take $\beta$ to be the restriction of $\alpha$ to $\mathfrak{C}(D,\mathcal{F}((Z(X),\cdot),e))$.
\end{proof}

\begin{figure}
\begin{center}
\includegraphics[height=3.5 cm]{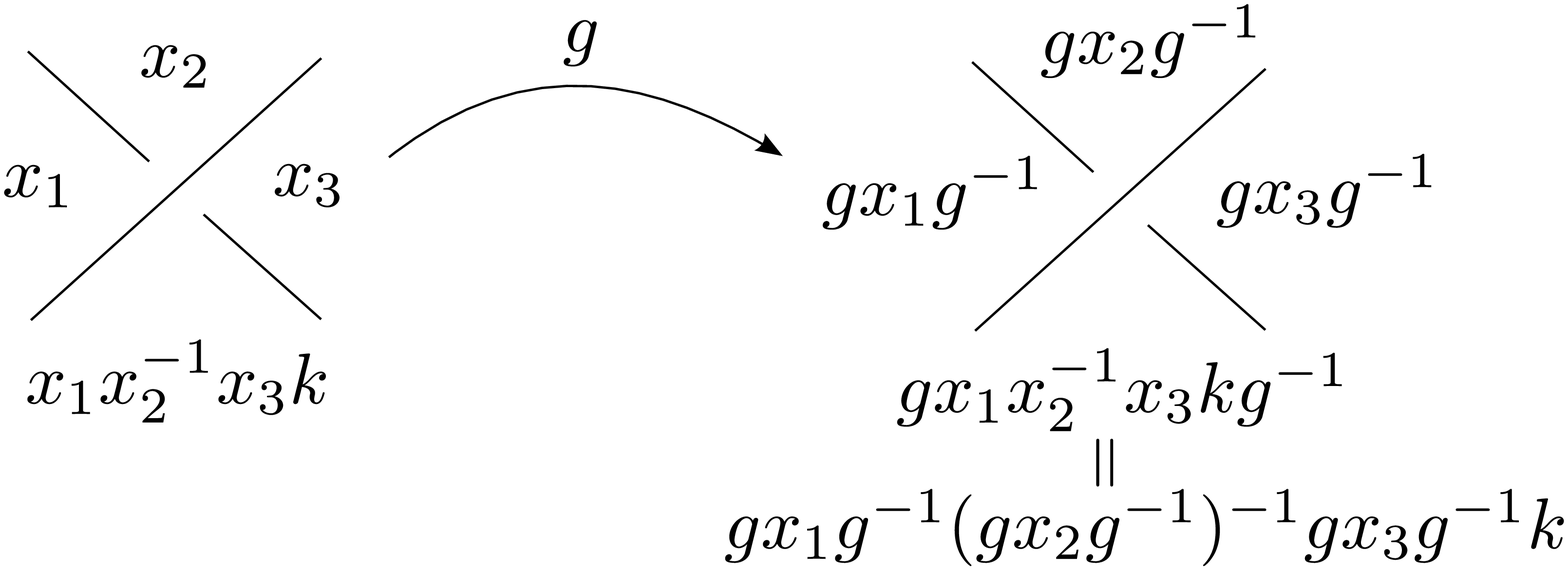}
\caption{A group $X$ acting on colorings by conjugation.}\label{action2}
\end{center}
\end{figure}

\begin{example}\label{actionbyconjugation}
Another example of an action of a group on the set of colorings that is compatible with the Reidemeister moves is given by the group $X$ (and its subgroups) acting on $\mathfrak{C}(D,\mathcal{F}((X,\cdot),k))$ by conjugation. More specifically, there is a mapping 
$g\times\mathcal{C}\mapsto\mathcal{C}{\mathchar"5E}g$ defined by 
$\mathcal{C}{\mathchar"5E}g(r)=g\mathcal{C}(r)g^{-1}$ for all regions $r$ of the diagram $D$. Centrality of $k$ is important, as can be seen in Fig. \ref{action2}.
Definition \ref{compatible} is satisfied, as we can take $\beta$ to be the identity isomorphism. Then $\alpha(\mathcal{C}{\mathchar"5E}g)=(\alpha(\mathcal{C})){\mathchar"5E}g$.
\end{example}

\begin{example}
We can generalize the action from Example \ref{actionbyconjugation}. Let $H$ and $S$ be subgroups of $X$. Then the direct product $H\times S$ acts on $\mathfrak{C}(D,\mathcal{F}((X,\cdot),k))$ via $\mathcal{C}{\mathchar"5E}(h,s)(r)=h\mathcal{C}(r)s^{-1}$, for
$(h,s)\in H\times S$.
\end{example}

Now we incorporate the actions on colorings compatible with the Reidemeister moves into cocycle invariants.

\begin{lemma}\label{cocyinvrefi}
Let $(X, [\, ])$ be a knot-theoretic ternary quasigroup, and $D$ be an oriented link diagram.
Suppose that $G\times \mathfrak{C}(D,(X,[\, ]))\to\mathfrak{C}(D,(X,[\, ]))$ is a group action compatible with the Reidemeister moves, and that 
\[
\mathcal{O}_1=\{\mathcal{C}_1,\ldots,\mathcal{C}_{n_1}\},
\mathcal{O}_2=\{\mathcal{C}_{n_1+1},\ldots,\mathcal{C}_{n_2}\},\ldots,
\mathcal{O}_s=\{\mathcal{C}_{n_{s-1}+1},\ldots,\mathcal{C}_{n_s}\}
\]
are the orbits of this action.
Let $\phi$ be a cocycle from the first cohomology of $(X,[\, ])$ with values in an abelian group $A$. Then the multiset of multisets
\[
\{\{\phi(c(\mathcal{C}_1)),\ldots,\phi(c(\mathcal{C}_{n_1}))\},
\{\phi(c(\mathcal{C}_{n_1+1})),\ldots,\phi(c(\mathcal{C}_{n_2}))\},\ldots,
\{\phi(c(\mathcal{C}_{n_{s-1}+1})),\ldots,\phi(c(\mathcal{C}_{n_s}))\}\}
\]
is a refinement of the cocycle invariant, that is not changed by the Reidemeister moves.
\end{lemma}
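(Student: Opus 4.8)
The plan is to show that the proposed multiset of multisets is preserved under each Reidemeister move, since the full cocycle invariant $\Psi(D,\phi)$ (the flattened multiset) is already known to be a Reidemeister invariant, and the extra structure we are adding is the partition of the colorings into orbits of the compatible action. So the essential new content is that the \emph{orbit structure itself} is transported correctly by the bijection $\alpha$ associated with a Reidemeister move. First I would recall the setup: a Reidemeister move taking $D$ to $D'$ induces a bijection $\alpha\colon\mathfrak{C}(D,(X,[\,]))\to\mathfrak{C}(D',(X,[\,]))$, and compatibility (Definition \ref{compatible}) supplies a group isomorphism $\beta\colon G\to G'$ satisfying $\alpha(\mathcal{C}\,{\mathchar"5E}\,g)=\alpha(\mathcal{C})\,{\mathchar"5E}\,\beta(g)$ for all $g\in G$, $\mathcal{C}\in\mathfrak{C}(D,(X,[\,]))$.

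Next I would prove the key claim: equivalent actions (in the sense of \eqref{equivalentact}) have orbits that correspond under $\alpha$. Concretely, if $\mathcal{O}=G\,{\mathchar"5E}\,\mathcal{C}=\{\mathcal{C}\,{\mathchar"5E}\,g : g\in G\}$ is an orbit of the $G$-action on $\mathfrak{C}(D,(X,[\,]))$, then its image under $\alpha$ is
\[
\alpha(\mathcal{O})=\{\alpha(\mathcal{C}\,{\mathchar"5E}\,g):g\in G\}=\{\alpha(\mathcal{C})\,{\mathchar"5E}\,\beta(g):g\in G\}=\{\alpha(\mathcal{C})\,{\mathchar"5E}\,g':g'\in G'\}=G'\,{\mathchar"5E}\,\alpha(\mathcal{C}),
\]
where the third equality uses that $\beta$ is a bijection onto $G'$. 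Thus $\alpha$ carries $G$-orbits in $D$ bijectively onto $G'$-orbits in $D'$, and since $\alpha$ is itself a bijection on the whole coloring set, it induces a bijection between the two orbit partitions. This is the heart of the argument and the step I expect to be the main (though not severe) obstacle, as it is where compatibility is genuinely used.

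Finally I would combine this with the already-established invariance of the values $\phi(c(\mathcal{C}))$ under the local coloring change encoded by $\alpha$. I would recall that $c(\mathcal{C})$ is a cycle and that the construction of $\Psi(D,\phi)$ is invariant under Reidemeister moves, which at the level of individual colorings means $\phi(c(\alpha(\mathcal{C})))=\phi(c(\mathcal{C}))$ for each $\mathcal{C}$ (the chains differ by a boundary, on which $\phi$ vanishes because $\phi$ is a cocycle). Hence for a single orbit $\mathcal{O}_i=\{\mathcal{C}_{j}\}$ in $D$, the inner multiset $\{\phi(c(\mathcal{C}_j))\}$ equals the inner multiset $\{\phi(c(\alpha(\mathcal{C}_j)))\}$ computed on the corresponding orbit $\alpha(\mathcal{O}_i)$ in $D'$. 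Assembling over all orbits, the two multisets of multisets coincide, so the refined invariant is unchanged by an arbitrary Reidemeister move, and therefore by any finite sequence of them.

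Two remarks I would add for completeness. First, the labeling of orbits in the statement is an arbitrary enumeration, but since we work with multisets of multisets, no ordering data is retained, so the bijection between orbit partitions is all that is needed. Second, although the statement is phrased for the first cohomology and link diagrams, the identical argument applies verbatim to the higher cocycle invariants for knotted surfaces, replacing Reidemeister moves with Roseman (or Yoshikawa) moves and the $1$-cocycle condition with the corresponding $2$-cocycle condition.
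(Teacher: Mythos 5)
Your proposal is correct and follows essentially the same route as the paper: both use the compatibility relation $\alpha(\mathcal{C}\,{\mathchar"5E}\,g)=\alpha(\mathcal{C})\,{\mathchar"5E}\,\beta(g)$ to show that $\alpha$ carries $G$-orbits bijectively onto $G'$-orbits, and then invoke the invariance of the homology class of $c(\mathcal{C})$ under the move to conclude $\phi(c(\mathcal{C}))=\phi(c(\alpha(\mathcal{C})))$ on each orbit.
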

\begin{proof}
Let $\alpha\colon\mathfrak{C}(D,(X,[\, ]))\to\mathfrak{C}(D',(X,[\, ]))$, $\beta$ and $G'$ be as in Definition \ref{compatible}.
We have: $\mathcal{O}_i=\{\mathcal{C}_{n_{i-1}+1},\ldots,\mathcal{C}_{n_i}\}$ is an orbit of the action of $G$ if and only if 
$\alpha(\mathcal{O}_i)=\{\alpha(\mathcal{C}_{n_{i-1}+1}),\ldots,\alpha(\mathcal{C}_{n_i})\}$ is an orbit of the action of $G'$. Indeed:
\[
\mathcal{C}_i{\mathchar"5E}g=\mathcal{C}_j \iff 
\alpha(\mathcal{C}_i{\mathchar"5E}g)=\alpha(\mathcal{C}_j) \iff
\alpha(\mathcal{C}_i){\mathchar"5E}\beta(g)=\alpha(\mathcal{C}_j).
\]
In \cite{Nie17} we proved that the homology class of a cycle assigned to a knot-theoretic ternary quasigroup coloring is not changed by the Reidemeister moves. It follows that 
\[
c(\mathcal{C}{\mathchar"5E}g)\sim c(\alpha(\mathcal{C}{\mathchar"5E}g))
,\ \textrm{and}\ \phi(c(\mathcal{C}{\mathchar"5E}g))=\phi(c(\alpha(\mathcal{C}{\mathchar"5E}g))),
\]
for all $\mathcal{C}\in\mathfrak{C}(D,(X,[\, ]))$ and $g\in G$,
what ends the proof.
\end{proof}

\begin{example}
We can use Lemma \ref{cocyinvrefi}, and the action from Example \ref{actionbyconjugation}, to
improve the cocycle invariant $\Psi(L,\Phi)$ on the set of links from Table \ref{cocvalues}. More specifically, we consider the action by conjugation with the three-element subgroup generated by $(1,4,8)(2,6,10)(3,7,11)(5,9,12)$. For the closures of the braids
$\sigma_1\sigma_1\sigma_1\sigma_2\sigma_1\sigma_1\sigma_2$,
$\sigma_1\sigma_1\sigma_2^{-1}\sigma_1\sigma_1\sigma_3\sigma_2^{-1}\sigma_3$,
$\sigma_1\sigma_1\sigma_1\sigma_1\sigma_1\sigma_1\sigma_2\sigma_1^{-1}\sigma_2$,
and $\sigma_1\sigma_1\sigma_1\sigma_1\sigma_2\sigma_1^{-1}\sigma_2\sigma_3\sigma_2^{-1}\sigma_3$,
the value of the cocycle invariant $\Psi(L,\Phi)$ is $768 + 264t + 408t^2$. In particular, there are 1440 colorings for each of these links. Also, in each case the set of colorings 
splits into 216 one-element orbits and 408 three-element orbits. This also divides the sets of cycles corresponding to the colorings. The values of the cocycle $\Phi$ on these groupings of cycles allow us to separate the four braids into two classes. We will write the results as polynomials in the brackets with multiplicities, with a multiplicity giving the number of orbits with the same value of the polynomial (which describes the values of $\Phi$ on a given subset of cycles). 
For the first two braids we have:
\[
\{ 132 [ 1 ], 212 [ 3 ], 60 [ t ], 68 [3t],
 24 [ t^2 ], 128 [3t^2] \},
\]
where, for example, $128 [3t^2]$ means that there are 128 three-element orbits such that 
$\Phi$ has value 2 on each coloring in the orbit. For the third and the fourth braid the results are:
\[
\{ 132 [ 1 ], 212 [ 3 ], 24 [t], 80 [3t], 
60 [t^2], 116 [3t^2] \}.
\]
Thus, some additional links are distinguished.
\end{example}

\end{document}